\newtheorem{theorem}{Theorem}[section]
\newtheorem{lemma}[theorem]{Lemma}
\newtheorem{proposition}[theorem]{Proposition}
\newtheorem{definition}[theorem]{Definition}
\newtheorem{remark}[theorem]{Remark}
\numberwithin{equation}{section}
\begin{document}
\title{Stationary solutions to Smoluchowski's coagulation equation with source} 
%\thanks{}
\author{Philippe Lauren\c{c}ot}
\address{Institut de Math\'ematiques de Toulouse, UMR~5219, Universit\'e de Toulouse, CNRS \\ F--31062 Toulouse Cedex 9, France}
\email{laurenco@math.univ-toulouse.fr}

%\author{}
%\address{}
%\email{}

\keywords{coagulation equation - source term - stationary solution - non-existence}
\subjclass[2010]{45K05}

\date{\today}

%%%%%%%%%%%%%%%%
%%%%%%%%%%%%%%%%
\begin{abstract}
Existence and non-existence of integrable stationary solutions to Smoluchowski's coagulation equation with source are investigated when the source term is integrable with an arbitrary support in $(0,\infty)$. Besides algebraic upper and lower bounds, a monotonicity condition is required for the coagulation kernel. Connections between integrability properties of the source and the corresponding stationary solutions are also studied.
\end{abstract}
%%%%%%%%%%%%%%%%
%%%%%%%%%%%%%%%%

\maketitle

%
%     HEADLINES
%
\pagestyle{myheadings}
\markboth{\sc{Ph. Lauren\c cot}}{\sc{Stationary solutions to Smoluchowski's coagulation equation with source}}

%%%%%%%%%%%%%%%%
%%%%%%%%%%%%%%%%
\section{Introduction}\label{sec1}
%%%%%%%%%%%%%%%%
%%%%%%%%%%%%%%%%

The coagulation equation with source describes the dynamics of a system of particles, in which particles interact by pairwise merging, thereby forming larger particles, and new particles are injected from the outside. Denoting the particle size distribution function of particles with size $x\in (0,\infty)$ at time $t>0$ by $f=f(t,x)\ge 0$, the corresponding evolution equation is
\begin{subequations}\label{scews}
\begin{align}
\partial_t f(t,x) & = \mathcal{C}f(t,x) + S(x)\,, \qquad (t,x)\in (0,\infty)^2\,, \label{scews1} \\
f(0,x) & = f^{in}(x)\,, \qquad x\in (0,\infty)\,, \label{scews2}
\end{align}
where $S$ is a time-independent function accounting for the external supply of particles and the coagulation mechanism is given by the nonlinear integral operator
\begin{equation}
\mathcal{C}f(x) := \frac{1}{2} \int_0^x K(y,x-y) f(y) f(x-y)\ \mathrm{d}y - \int_0^\infty K(x,y) f(x) f(y)\ \mathrm{d}y \label{scews3}
\end{equation}
\end{subequations}
for $x\in (0,\infty)$. In \eqref{scews3}, the coagulation kernel $K$ is a non-negative and symmetric function and $K(x,y)=K(y,x)$ measures the rate of merging of particles with respective sizes $x$ and $y$. The first integral on the right hand side of \eqref{scews3} accounts for the formation of particles with size $x$ as a result of the coagulation of two particles with respective sizes $y\in (0,x)$ and $x-y$, while the second one describes the disappearance of particles with size $x$ when merging with other particles.

Since the pioneering works \cite{BaCa1990, LeTs1981a, McLe1962c, McLe1964, Melz1957b, Spou1984, Stew1989, Whit1980}, Smoluchowski's coagulation equation \eqref{scews} without source ($S\equiv 0$), originally derived in \cite{Smol1916, Smol1917}, has been extensively studied in the mathematical literature for various choices of the coagulation kernel $K$ and we refer to the books \cite{BLL2019, Dubo1994b} and the references therein for a more detailed account. Since the addition of a source term does not change the  mathematical structure of the equation, the well-posedness of Smoluchowski's coagulation equation with source \eqref{scews} can be proved in a similar way as that of Smoluchowski's coagulation equation \cite{EsMi2006, KuTh2019, Spou1985b, SvR2002, Whit1980}. It is however worth emphasizing that the presence of a source drastically changes the dynamics,  as the continuous injection of new particles in the system somewhat balances the transfer of matter towards larger and larger particles due to coagulation. In particular, convergence to a stationary state is shown in \cite{Dubo1994b, SvR2002, Simo1998} for the constant coagulation kernel, a feature which leads to the question of existence and stability of stationary solutions for other choices of coagulation kernels. A thorough study of the existence issue is performed in \cite{FLNV} for coagulation kernels satisfying 
\begin{equation}
k_1 \left( x^{\gamma+\alpha} y^{-\alpha} + x^{-\alpha} y^{\gamma+\alpha} \right) \le K(x,y) \le k_2 \left( x^{\gamma+\alpha} y^{-\alpha} + x^{-\alpha} y^{\gamma+\alpha} \right)\,, \qquad (x,y)\in (0,\infty)^2\,, \label{flnv1}
\end{equation} 
where $(\gamma,\alpha)\in \mathbb{R}^2$ and $k_2>k_1>0$. Assuming that the source term $S$ is a non-negative bounded Radon measure on $(0,\infty)$ with compact support in $[1,L]$ for some $L>1$, the existence of at least one non-negative measure-valued stationary solution $f(\mathrm{d}x)$ to \eqref{scews1} satisfying
\begin{equation}
\int_0^\infty \left( x^{\gamma+\alpha} + x^{-\alpha} \right) f(\mathrm{d}x) < \infty \label{flnv2}
\end{equation}
is shown in \cite[Theorem~2.2]{FLNV} when $|\gamma+2\alpha|<1$. In addition,
\begin{equation}
\int_0^\infty x^\mu f(\mathrm{d}x) < \infty\,, \quad \mu<\frac{1+\gamma}{2}\,, \qquad \int_0^\infty x^{(1+\gamma)/2} f(\mathrm{d}x) = \infty\,, \label{flnv3}
\end{equation}
see \cite[Corollary~6.4]{FLNV}, so that $f(\mathrm{d}x)$ cannot decay too fast for large sizes (observe that the condition $|\gamma+2\alpha|<1$ implies that $\max\{\gamma+\alpha,-\alpha\}<(1+\gamma)/2$). Furthermore, if $S\not\equiv 0$ and $|\gamma+2\alpha|\ge 1$, then there is no non-negative measure-valued stationary solution to \eqref{scews1}  satisfying \eqref{flnv2}, see \cite[Theorem~2.4]{FLNV}. 

\bigskip

The purpose of this note is twofold: on the one hand, for coagulation kernels satisfying \eqref{flnv1}, we extend the validity of the existence and non-existence results established in \cite{FLNV} to source terms which are not necessarily compactly supported in $(0,\infty)$. We however restrict the analysis to source terms and stationary solutions which are absolutely continuous with respect to the Lebesgue measure on $(0,\infty)$ and, to this end, an additional monotonicity condition is required on the coagulation kernel. On the other hand, for such source terms, we provide alternative proofs for the existence and non-existence results established in \cite{FLNV}.

\bigskip

We actually begin our analysis with the following observation, already pointed out in \cite[Chapter~8]{Dubo1994b}. If $K$ is a coagulation kernel satisfying \eqref{flnv1} and $f$ is a stationary solution to \eqref{scews1}, then $f_\theta(x):= x^\theta f(x)$, $x>0$, is a stationary solution to \eqref{scews1} with coagulation kernel $K_\theta(x,y) := (xy)^{-\theta} K(x,y)$ and $\theta:= \min\{ \gamma+\alpha,-\alpha \}$, and $K_\theta$ satisfies the growth condition \eqref{flnv1} with $(|\gamma+2\alpha|,0)$ instead of $(\gamma,\alpha)$. Thanks to this observation, we shall assume from now on that there are $\lambda\ge 0$ and $k_2>k_1>0$ such that the coagulation kernel $K$ satisfies
\begin{equation}
k_1 \left( x^\lambda + y^\lambda \right) \le K(x,y) \le k_2 \left( x^\lambda + y^\lambda \right)\,, \qquad (x,y)\in (0,\infty)^2\,. \label{hypK}
\end{equation}
We supplement \eqref{hypK} with the following monotonicity condition on $K$
\begin{equation}
K(x-y,y)\le K(x,y)\,, \qquad 0 < y \le x\,, \label{monK}
\end{equation}
which is known to play an important role in the derivation of uniform integrability estimates such as $L^p$-estimates, $p>1$, see \cite{Buro1983, Dubo1994b, LaMi02c, MiRR03}.

Before providing a precise definition of stationary solutions to \eqref{scews1} along with the statements of the main results, let us introduce some notation: for $m\in\mathbb{R}$, we set $X_m := L^1((0,\infty),x^m \mathrm{d}x)$ and $X_{0,m} := X_0\cap X_m$, and denote their respective positive cones by $X_m^+$ and $X_{0,m}^+$. For $h\in X_m$, we put
\begin{equation*}
M_m(h) = \int_0^\infty x^m h(x)\ dx\,, \qquad h\in X_m\,.
\end{equation*}

\bigskip

We now define the notion of weak stationary solutions to the coagulation equation with source \eqref{scews1} to be used in the sequel. Besides the required absolute continuity with respect to the Lebesgue measure, it is quite similar to \cite[Definition~2.1]{FLNV}.

%%%%%%%%%%%%%%%%
\begin{definition}\label{defD1}
Let $\lambda\ge 0$ and consider a coagulation kernel $K$ satisfying \eqref{hypK} and $S\in X_0^+$. A stationary solution to the coagulation equation with source \eqref{scews1} is a function $\varphi\in X_{0,\lambda}^+$ such that
\begin{equation}
\frac{1}{2} \int_0^\infty \int_0^\infty \chi_\vartheta(x,y) K(x,y) \varphi(x)\varphi(y)\ \mathrm{d}y \mathrm{d}x + \int_0^\infty S(x) \vartheta(x)\ \mathrm{d}x = 0 \label{D1}
\end{equation}
for all $\vartheta\in L^\infty(0,\infty)$, where
\begin{equation}
\chi_\vartheta(x,y) := \vartheta(x+y) - \vartheta(x) - \vartheta(y)\,, \qquad (x,y)\in (0,\infty)^2\,. \label{D1a}
\end{equation}
\end{definition}
%%%%%%%%%%%%%%%%

We now state the existence and non-existence results we establish in this paper.

%%%%%%%%%%%%%%%%
\begin{theorem}\label{thmD0}
Let $\lambda\ge 0$ and consider a coagulation kernel $K$ satisfying \eqref{hypK} and $S\in X_0^+$.
\begin{itemize}
	\item[(a)] Assume further that $K$ satisfies \eqref{monK} and $S\in \bigcap_{m\in (0,1)} X_m$. If $\lambda\in [0,1)$, then there is at least one stationary solution $\varphi$ to \eqref{scews1} such that 
	\begin{equation}
	\varphi\in X_m\,, \qquad 0 \le m < \frac{1+\lambda}{2}\,, \qquad \varphi\not\in X_{(1+\lambda)/2}\,. \label{D100}
	\end{equation}
	In addition, if $S\in X_m$ for some $m\in (-\infty,0)$, then $\varphi\in X_m$.
	\item[(b)] If $\lambda\ge 1$ and $\varphi$ is a stationary solution to \eqref{scews1}, then $\varphi=S\equiv 0$.
\end{itemize}
\end{theorem}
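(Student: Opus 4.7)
For part (b), my plan is to test Definition~\ref{defD1} successively against $\vartheta \equiv 1$ and against the Lipschitz truncation $\vartheta_R(x) := x \wedge R \in L^\infty(0,\infty)$. The first choice gives $\tfrac{1}{2}\int_0^\infty\!\int_0^\infty K\varphi\varphi = M_0(S) < \infty$; combined with $\varphi \in X_{0,\lambda}^+$ and $\lambda \ge 1$, H\"older's inequality yields the additional integrability $M_1(\varphi) \le M_\lambda(\varphi)^{1/\lambda} M_0(\varphi)^{(\lambda-1)/\lambda} < \infty$. A short case analysis on the signs of $R - x$, $R - y$, $R - (x+y)$ shows the $R$-uniform pointwise bound
\[
0 \le -\chi_{\vartheta_R}(x,y) = (x \wedge R) + (y \wedge R) - (x+y) \wedge R \le \min(x,y),
\]
so \eqref{D1} with $\vartheta = \vartheta_R$ reads
\[
\int_0^\infty S(x)(x \wedge R)\,\mathrm{d}x = \tfrac{1}{2}\int_0^\infty\!\int_0^\infty (-\chi_{\vartheta_R})(x,y) K(x,y)\varphi(x)\varphi(y)\,\mathrm{d}y\,\mathrm{d}x,
\]
with integrand dominated by $k_2 \min(x,y)(x^\lambda + y^\lambda)\varphi(x)\varphi(y)$, whose double integral is at most $2 M_1(\varphi) M_\lambda(\varphi) < \infty$ by Tonelli and the pointwise bounds $\min(x,y)\,x^\lambda \le y\,x^\lambda$ and $\min(x,y)\,y^\lambda \le x\,y^\lambda$. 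Since $-\chi_{\vartheta_R}(x,y) \to 0$ pointwise as $R \to \infty$ (it vanishes once $R \ge x+y$), dominated convergence sends the right-hand side to zero and monotone convergence sends the left-hand side to $M_1(S)$, forcing $M_1(S) = 0$ and hence $S \equiv 0$; returning to the first identity, $\int_0^\infty\!\int_0^\infty K\varphi\varphi = 0$ together with the positivity of $K$ yields $\varphi \equiv 0$.

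For part (a), I would adopt an approximation strategy: truncate to $K_n := K\mathbf{1}_{[0,n]^2}$, replace $S$ by a compactly supported $S_n$ converging suitably to $S$, construct a stationary solution $\varphi_n$ of the truncated problem (either via the existence theorem of~\cite{FLNV}, or, in the alternative-proof spirit announced in the introduction, as the large-time limit of the corresponding evolution equation), and then pass to the limit $n \to \infty$. Two layers of uniform-in-$n$ bounds are required: moment estimates $\sup_n M_m(\varphi_n) < \infty$ for every $m \in [0, (1+\lambda)/2)$, which I would obtain by testing with $\vartheta = x^m \wedge R$ and combining the concavity inequality $x^m + y^m - (x+y)^m \gtrsim \min(x,y)^m$ with the lower bound $K \ge k_1(x^\lambda + y^\lambda)$; and an equi-integrability (uniform $L^p$) estimate with $p > 1$, which is precisely where the monotonicity condition \eqref{monK} enters, following the $L^p$-theory of~\cite{Buro1983, LaMi02c, MiRR03}. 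A Dunford--Pettis/Vitali argument then passes to the limit in~\eqref{D1} for every $\vartheta \in L^\infty$, producing a stationary solution $\varphi \in X_m$ for $0 \le m < (1+\lambda)/2$.

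The non-integrability $\varphi \notin X_{(1+\lambda)/2}$ would be established by contradiction: assuming $M_{(1+\lambda)/2}(\varphi) < \infty$ and testing~\eqref{D1} with $\vartheta = x^{(1+\lambda)/2} \wedge R$, the concavity inequality $x^m + y^m - (x+y)^m \ge c\,(xy)^{m/2}$ at $m = (1+\lambda)/2$ together with $K \ge k_1(x^\lambda + y^\lambda)$ would, upon sending $R \to \infty$, generate an identity incompatible with $M_0(S) > 0$. Preservation of negative moments ($S \in X_m$, $m < 0$, implies $\varphi \in X_m$) stems from a small-$x$ pointwise comparison: since $\int K(x,y)\varphi(y)\,\mathrm{d}y \ge k_1 M_\lambda(\varphi)$ is bounded below, $\varphi(x)$ is controlled by $S(x)$ plus the gain term $\int_0^x K(y,x-y)\varphi(y)\varphi(x-y)\,\mathrm{d}y$, and integrating against $x^m$ preserves $X_m$-integrability. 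The main obstacle in this programme is the uniform equi-integrability of $(\varphi_n)$ in part~(a): because $\mathcal{C}$ is nonlocal and quadratic, weak $L^1$-compactness alone does not suffice to pass to the limit in the coagulation integrals, and one genuinely needs a uniform $L^p$-bound with $p > 1$. Obtaining this bound for source terms that are not compactly supported in $(0,\infty)$ is the raison d'\^etre of the monotonicity hypothesis~\eqref{monK} and the technical heart of the argument; part~(b), by contrast, reduces to the truncation computation above once one recognises that $\lambda \ge 1$ automatically provides the moment $M_1(\varphi) < \infty$ needed to dominate $-\chi_{\vartheta_R}\, K\varphi\varphi$.
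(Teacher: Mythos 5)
Your part~(b) is correct and is essentially the paper's own proof: the paper also tests \eqref{D1} with $\vartheta_A(x)=\min\{x,A\}$, uses $\varphi\in X_0\cap X_\lambda\subset X_1$ (interpolation, $\lambda\ge 1$) to dominate $-\chi_{\vartheta_A}K\varphi\otimes\varphi$ by $k_2(xy^\lambda+yx^\lambda)\varphi(x)\varphi(y)$, concludes $S\equiv 0$ by dominated convergence, and then $\varphi\equiv 0$ from the $\vartheta\equiv 1$ identity. No issues there.

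Part~(a), however, has genuine gaps at three places. First, the construction of the approximate stationary solutions: "the large-time limit of the corresponding evolution equation" is not available — convergence to equilibrium is only known for the constant kernel — and this is precisely why the paper does not take that route. Instead it truncates the source ($S_\delta=S\mathbf{1}_{(0,1/\delta)}$), adds an efflux term $-2\delta f$, builds a convex, weakly compact invariant set $\mathcal{Z}_\delta$ for the semi-flow, and invokes a Tychonov-type fixed point theorem \cite[Theorem~1.2]{EMRR2005}; the efflux term is what makes the high moments ($M_1$, $M_{1+\lambda}$) of the invariant set controllable. Second, your equi-integrability plan via a uniform $L^p$ bound cannot work for general $S\in X_0^+$: the stationary solution inherits the local integrability of $S$ (cf.\ Proposition~\ref{prc2}), so if $S\notin L^p$ no $L^p$ bound on $\varphi$ is to be expected. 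The paper instead takes a de la Vall\'ee-Poussin convex function $\Phi$ adapted to $S$ and propagates $\int\Phi(f_\delta)$ using \eqref{monK} via \cite[Lemma~8.2.18]{BLL2019}; this is where the monotonicity hypothesis actually enters. Third, and most seriously, your argument for $\varphi\notin X_{(1+\lambda)/2}$ rests on the inequality $x^m+y^m-(x+y)^m\ge c\,(xy)^{m/2}$, which is false (take $x=1$, $y\to\infty$: the left side tends to $1$ while the right side blows up); the correct lower bound is of order $(xy)^m(x+y)^{-m}\sim\min(x,y)^m$. Even with the correct bound, testing with $x^{(1+\lambda)/2}\wedge R^{(1+\lambda)/2}$ yields only the finite identity $M_{(1+\lambda)/2}(S)=\frac12\iint[x^\mu+y^\mu-(x+y)^\mu]K\varphi\varphi$, which is perfectly consistent with $M_0(S)>0$ (compare \eqref{D6} at the exponent $\lambda$) — no contradiction arises. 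The contradiction requires the \emph{linear} truncation $\min\{x,A\}$, for which $\chi$ vanishes on $\{x+y\le A\}$ so that only tail integrals survive, and these tend to $0$ under the assumption $\varphi\in X_{(1+\lambda)/2}$, forcing $\int\min\{x,A\}S\,\mathrm{d}x\to 0$ and hence $S\equiv 0$ (Proposition~\ref{prc1}). Finally, your pointwise comparison for negative moments does not obviously close (the gain term contributes $k_2 M_{m+\lambda}(\varphi)M_0(\varphi)+k_2M_m(\varphi)M_\lambda(\varphi)$ with the wrong constant); the paper's Proposition~\ref{prc2} avoids this by testing with the nonincreasing weight $w(\cdot+\varepsilon)$, for which $-\chi_w\ge w(x+\varepsilon)$ absorbs the gain term exactly.
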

%%%%%%%%%%%%%%%%

An alternative formulation of Theorem~\ref{thmD0}~(b) is that, for $\lambda\ge 1$ and $S\in X_0^+$, $S\not\equiv 0$, there is no stationary solution to \eqref{scews1} in the sense of Definition~\ref{defD1}. 

%%%%%%%%%%%%%%%%
\begin{remark}\label{remX}
According to the above mentioned connection between stationary solutions to \eqref{scews1} for coagulation kernels satisfying \eqref{flnv1} and \eqref{hypK}, existence and non-existence results of stationary solutions to \eqref{scews1} for coagulation kernels satisfying \eqref{flnv1} can be deduced from Theorem~\ref{thmD0}. Indeed, consider a coagulation kernel $K$ satisfying \eqref{flnv1} and $(x-y)^{-\theta} K(x-y,y) \le x^{-\theta} K(x,y)$ for $(x,y)\in (0,\infty)^2$ and $\theta=\min\{\gamma+\alpha,-\alpha\}$. Then, given a source term $S\in \bigcap_{m\in [0,1)} X_m^+$, $S\not\equiv 0$, there is at least one stationary solution to \eqref{scews1} which belongs to $X_m^+$ for $m\in [\theta,(1+\gamma)/2)$, but not to $X_{(1+\gamma)/2}$, when $|\gamma+2\alpha|\in [0,1)$ and no such solution when $|\gamma+2\alpha|\ge 1$. This is in accordance with the results established in \cite{FLNV}.
\end{remark}
%%%%%%%%%%%%%%%%

As already mentioned, the outcome of Theorem~\ref{thmD0} matches the results obtained in \cite{FLNV} for source terms which are non-negative bounded Radon measures on $(0,\infty)$ with compact support in $(0,\infty)$. We here relax the assumption on the support and obtain directly integrable stationary solutions to \eqref{scews1} when the source term is integrable. Also, the proof of Theorem~\ref{thmD0} provided below relies rather on global integral estimates, while local integral estimates are at the forefront of the analysis performed in \cite{FLNV}. As a consequence, more precise information on the local behaviour of stationary solutions is obtained, see \cite[Proposition~6.3]{FLNV}. Finally, as already pointed out in \cite{Dubo1994b, FLNV}, the non-integrability property stated in \eqref{D100} is a striking feature of stationary solutions to \eqref{scews1} as their decay at infinity is prescribed solely by the growth of the coagulation kernel and is not sensitive to the decay at infinity of the source term.

We now describe the contents of this paper. In Section~\ref{sec2}, we derive properties of stationary solutions $\varphi$ to \eqref{scews1} in the sense of Definition~\ref{defD1}, including the non-integrability property $\varphi\not\in X_{(1+\lambda)/2}$ (Proposition~\ref{prc1}) and improved integrability properties of $\varphi$ for small sizes induced by that of the source term (Proposition~\ref{prc2}). We also derive in Proposition~\ref{prD2} upper and lower bounds on $M_0(\varphi)$ and $M_\lambda(\varphi)$ in terms of $M_0(S)$ and $M_\lambda(S)$. Though not directly used in the subsequent analysis, these estimates, in particular \eqref{D2b}, provide guidelines for the proof of Theorem~\ref{thmD0}~(a), see Lemma~\ref{lemb1} and Lemma~\ref{lemb4b}. Section~\ref{sec3} is devoted to the existence of stationary solutions (Theorem~\ref{thmD0}~(a)) and combines a dynamical approach and a compactness method, an approach which has already proved successful to construct self-similar solutions to Smoluchowski's coagulation equation \cite{EsMi2006, EMRR2005, FoLa2005, NiVe2013a} and stationary solutions to coagulation-fragmentation equations \cite{EMRR2005, Laur2019}. Specifically, given a small parameter $\delta\in (0,1)$, we consider an approximation of \eqref{scews} obtained by truncating the source term ($S_\delta:=S \mathbf{1}_{(0,1/\delta)}$) and adding an efflux term $-2\delta f$. We then show that the associated initial value problem is well-posed in $X_{0,1+\lambda}^+$ and construct an invariant set $\mathcal{Z}_\delta$, which is non-empty, convex, and sequentially weakly compact in $X_0$. Owing to these properties, an application of a consequence of Tychonov's fixed point theorem, see \cite[Theorem~1.2]{EMRR2005}, ensures the existence of a stationary solution $\varphi_\delta$  to this approximation. A by-product of the construction of the invariant set $\mathcal{Z}_\delta$ is the derivation of estimates which do not depend on the approximation parameter $\delta$ and ensure that the family $(\varphi)_{\delta\in (0,1)}$ lies in a sequentially weakly compact subset of $X_0$. We then show that the corresponding cluster points of $(\varphi)_{\delta\in (0,1)}$ as $\delta\to 0$ are stationary solutions to \eqref{scews1}, thereby completing the proof of Theorem~\ref{thmD0}~(a). We end up the paper with the non-existence of stationary solutions in the sense of Definition~\ref{defD1}, as stated in Theorem~\ref{thmD0}~(b), which is proved in Section~\ref{sec4}.

%%%%%%%%%%%%%%%%
%%%%%%%%%%%%%%%%
\section{Properties of stationary solutions}\label{sec2}
%%%%%%%%%%%%%%%%
%%%%%%%%%%%%%%%%

Let $\lambda\ge 0$ and consider a coagulation kernel $K$ satisfying \eqref{hypK} and $S\in X_0^+$. We first show that non-trivial stationary solutions to \eqref{scews1} do not decay too fast for large volumes, a property already observed in \cite[Theorem~8.1]{Dubo1994b} for $\lambda=0$ and in \cite[Corollary~6.4]{FLNV} for $\lambda\in [0,1)$. The proof given below differs from that in \cite{FLNV} and is closer to that in \cite{Dubo1994b}, an additional approximation argument being needed to handle the unboundedness of $K$ when $\lambda\in (0,1)$.

%%%%%%%%%%%%%%%%
\begin{proposition}\label{prc1}
	Assume that $\lambda\in [0,1)$ and let $\varphi$ be a stationary solution to \eqref{scews1}. If $S\not\equiv 0$, then $\varphi\not\in X_{(1+\lambda)/2}$.
\end{proposition}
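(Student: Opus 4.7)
The strategy is to argue by contradiction: assume $\varphi \in X_{(1+\lambda)/2}$ and derive a contradiction with the hypothesis $S\not\equiv 0$. Formally, testing \eqref{D1} against the linear function $\vartheta(x)=x$ would give $\chi_\vartheta \equiv 0$ and hence $\int_0^\infty x S(x)\,\mathrm{d}x = 0$; the plan is to make this formal computation rigorous through a truncation argument, using the bounded test function $\vartheta_R(x):=\min\{x,R\}\in L^\infty(0,\infty)$ and then passing to the limit $R\to\infty$.

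The crux of the proof is the construction of an $R$-independent integrable majorant for $\chi_{\vartheta_R}(x,y) K(x,y)\varphi(x)\varphi(y)$. Since $\vartheta_R$ is concave and non-decreasing on $[0,\infty)$ and vanishes at the origin, it is subadditive, hence $\chi_{\vartheta_R}(x,y)\le 0$ for all $(x,y)\in(0,\infty)^2$. A short case analysis on the four regions $\{x+y\le R\}$, $\{x,y\le R<x+y\}$, $\{x\le R\le y\}$ and $\{x,y\ge R\}$ moreover yields
\begin{equation*}
\bigl|\chi_{\vartheta_R}(x,y)\bigr| \leq \min\{x,y\}\,, \qquad (x,y)\in(0,\infty)^2\,.
\end{equation*}
Combining this bound with \eqref{hypK} and the elementary inequality $\min\{x,y\}\bigl(x^\lambda+y^\lambda\bigr) \leq 2\,(xy)^{(1+\lambda)/2}$ (which reduces, for $x\le y$, to $(x/y)^{(1-\lambda)/2}\le 1$ and holds since $\lambda\in[0,1)$), I then obtain
\begin{equation*}
\bigl|\chi_{\vartheta_R}(x,y)\bigr|\, K(x,y) \leq 2k_2\,(xy)^{(1+\lambda)/2}\,, \qquad (x,y)\in(0,\infty)^2\,,
\end{equation*}
whose right-hand side is integrable against $\varphi(x)\varphi(y)$ with integral $2k_2 M_{(1+\lambda)/2}(\varphi)^2<\infty$ under the contradiction hypothesis.

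With this majorant in hand, I insert $\vartheta_R$ into \eqref{D1} and let $R\to\infty$. Since $\chi_{\vartheta_R}(x,y)=0$ as soon as $R\ge x+y$, Lebesgue's dominated convergence theorem yields
\begin{equation*}
\lim_{R\to\infty}\frac{1}{2}\int_0^\infty\!\!\int_0^\infty \chi_{\vartheta_R}(x,y) K(x,y)\varphi(x)\varphi(y)\,\mathrm{d}y\,\mathrm{d}x = 0\,,
\end{equation*}
while the monotone convergence theorem gives $\int_0^\infty \vartheta_R(x)S(x)\,\mathrm{d}x \nearrow \int_0^\infty x S(x)\,\mathrm{d}x \in (0,\infty]$, the limit being strictly positive because $S\in X_0^+\setminus\{0\}$. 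Passing to the limit in \eqref{D1} therefore forces $\int_0^\infty x S(x)\,\mathrm{d}x = 0$, contradicting $S\not\equiv 0$.

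The main technical obstacle is the very construction of the uniform integrable majorant above: it is precisely the matching between the concavity bound $|\chi_{\vartheta_R}|\le \min\{x,y\}$ and the sublinear growth $K \leq k_2(x^\lambda+y^\lambda)$ that produces the symmetric weight $(xy)^{(1+\lambda)/2}$, and this explains why the critical threshold appearing in \eqref{D100} is exactly $(1+\lambda)/2$. Note that neither the monotonicity condition \eqref{monK} nor any integrability of $S$ beyond $S\in X_0^+$ enters the argument.
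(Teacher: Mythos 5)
Your proof is correct and follows essentially the same route as the paper: both test \eqref{D1} against $\vartheta_R(x)=\min\{x,R\}$ and exploit the bound $|\chi_{\vartheta_R}(x,y)|\,K(x,y)\le 2k_2\,(xy)^{(1+\lambda)/2}$, which is integrable against $\varphi\otimes\varphi$ under the contradiction hypothesis $\varphi\in X_{(1+\lambda)/2}$, to conclude that $\int_0^\infty \min\{x,R\}S(x)\,\mathrm{d}x\to 0$ and hence $S\equiv 0$. The only difference is in the limit passage: the paper splits the double integral over three regions and estimates each piece by the tail $J(A)=\int_A^\infty x^{(1+\lambda)/2}\varphi(x)\,\mathrm{d}x$, whereas your uniform majorant $|\chi_{\vartheta_R}|\le\min\{x,y\}$ lets you invoke dominated convergence in one stroke --- a slightly cleaner execution of the same idea, and your closing remark that neither \eqref{monK} nor any extra integrability of $S$ is used is accurate.
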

%%%%%%%%%%%%%%%%

\begin{proof}
	We argue by contradiction and assume that $\varphi\in X_{(1+\lambda)/2}$. Then \begin{equation*}
	J(A) := \int_A^\infty x^{(1+\lambda)/2} \varphi(x)\ \mathrm{d}x 
	\end{equation*}
	is finite for all $A\ge 0$ and 
	\begin{equation*}
	\lim_{A\to \infty} J(A) = 0\,.
	\end{equation*}
	
	Now, let $A>0$ and set $\vartheta(x) := \min\{x, A\}$ for $x>0$. We infer from \eqref{D1} and the symmetry of $K$ that 
	\begin{equation}
	\begin{split}
	\int_0^\infty \vartheta_A(x) S(x)\ \mathrm{d}x & = \frac{1}{2} \int_0^A \int_{A-x}^A (x + y - A) K(x,y) \varphi(x) \varphi(y)\ \mathrm{d}y\mathrm{d}x \\ 
	& \quad + \int_0^A \int_A^\infty x K(x,y) \varphi(x) \varphi(y)\ \mathrm{d}y\mathrm{d}x \\
	& \quad + \frac{A}{2} \int_A^\infty \int_A^\infty  K(x,y) \varphi(x) \varphi(y)\ \mathrm{d}y\mathrm{d}x \,.
	\end{split} \label{c100}
	\end{equation} 
	
	We now study the behaviour of the terms on the right hand side of \eqref{c100} as $A\to\infty$. First, since $(1+\lambda)/2\in (0,1)$, it follows from \eqref{hypK} that, for $(x,y)\in (0,A)^2$ such that $x+y>A$,
	\begin{align*}
	(x+y-A) K(x,y) & \le k_2 (x+y-A) \left( x^\lambda + y^\lambda \right) \\
	& \le k_2 (x+y-A)^{(1-\lambda)/2} x^\lambda (x+y-A)^{(1+\lambda)/2} \\
	& \quad + k_2 (x+y-A)^{(1+\lambda)/2} (x+y-A)^{(1-\lambda)/2} y^\lambda \\
	& \le 2 k_2 (xy)^{(1+\lambda)/2}\,.
	\end{align*}
	Consequently,
	\begin{align*}
	& \frac{1}{2} \int_0^A \int_{A-x}^A (x + y - A) K(x,y) \varphi(x) \varphi(y)\ \mathrm{d}y\mathrm{d}x \\
	& \qquad \le k_2 \int_0^A \int_{A-x}^A (xy)^{(1+\lambda)/2} \varphi(x) \varphi(y)\ \mathrm{d}y\mathrm{d}x \\
	& \qquad \le k_2 \int_0^{A/2} \int_{A/2}^A (xy)^{(1+\lambda)/2} \varphi(x) \varphi(y)\ \mathrm{d}y\mathrm{d}x + k_2 \int_{A/2}^A \int_0^A (xy)^{(1+\lambda)/2} \varphi(x) \varphi(y)\ \mathrm{d}y\mathrm{d}x \\
	& \qquad \le 2 k_2 M_{(1+\lambda)/2}(\varphi) J(A/2)\,.
	\end{align*}
	Next, using again \eqref{hypK} and the property $\lambda\in [0,1)$, we find
	\begin{align*}
	\int_0^A \int_A^\infty x K(x,y) \varphi(x) \varphi(y)\ \mathrm{d}y\mathrm{d}x & \le k_2 \int_0^A \int_A^\infty \left( x^{1+\lambda} + x y^\lambda \right) \varphi(x) \varphi(y)\ \mathrm{d}y \mathrm{d}x \\
	& \le 2 k_2 \int_0^A \int_A^\infty (xy)^{(1+\lambda)/2} \varphi(x) \varphi(y)\ \mathrm{d}y \mathrm{d}x \\
	& \le 2 k_2 M_{(1+\lambda)/2}(\varphi) J(A)
	\end{align*}
	and 
	\begin{align*}
	\frac{A}{2} \int_A^\infty \int_A^\infty  K(x,y) \varphi(x) \varphi(y)\ \mathrm{d}y\mathrm{d}x & \le \frac{A k_2}{2} \int_A^\infty \int_A^\infty  \left( x^\lambda + y^\lambda \right) \varphi(x) \varphi(y)\ \mathrm{d}y\mathrm{d}x \\
	& \le A k_2 \int_A^\infty \int_A^\infty  x^\lambda \varphi(x) \varphi(y)\ \mathrm{d}y\mathrm{d}x \\
	& = k_2 \left( \int_A^\infty A^{(1-\lambda)/2} x^\lambda \varphi(x)\ \mathrm{d}x \right) \left( \int_A^\infty A^{(1+\lambda)/2} \varphi(y)\ \mathrm{d}y\mathrm{d}x \right) \\
	& \le k_2 J(A)^2\,.
	\end{align*}
	
	Gathering the above estimates, we deduce from \eqref{c100} that
	\begin{equation*}
	\int_0^\infty \min\{x, A\} S(x)\ \mathrm{d}x \le 2k_2 M_{(1+\lambda)/2}(\varphi) \left[ J(A/2) + J(A) \right] + k_2 J(A)^2\,.
	\end{equation*}
	Hence,
	\begin{equation*}
	\lim_{A\to \infty} \int_0^\infty \min\{x, A\} S(x)\ \mathrm{d}x =0\,,
	\end{equation*}
	which implies that $S\equiv 0$, and a contradiction.
\end{proof}

We next show that the behaviour of $S$ for small sizes governs that of stationary solutions.

%%%%%%%%%%%%%%%%
\begin{proposition}\label{prc2}
	Let $\varphi$ be a stationary solution to \eqref{scews1}. If $w\in C((0,\infty))$ is a non-negative and non-increasing function and $S\in L^1((0,\infty),w(x)\mathrm{d}x)$, then $\varphi\in L^1((0,\infty),w(x)\mathrm{d}x)$. In particular, if $S\in X_m$ for some $m\in (-\infty,0)$, then $\varphi\in X_m$.
\end{proposition}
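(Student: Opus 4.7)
The plan is to test the weak stationary formulation \eqref{D1} against a bounded truncation of the weight $w$, combine the monotonicity of $w$ with the lower bound on $K$ from \eqref{hypK} to produce a linear upper bound on the truncated integral of $w\varphi$, and then let the truncation parameter diverge via monotone convergence. The mild subtlety is that $w$ itself need not be bounded (for instance $w(x)=x^m$ with $m<0$ blows up at the origin), so it is not directly admissible as a test function in Definition~\ref{defD1}.

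To implement this, for every integer $n\ge 1$ I set $w_n(x):=\min\{w(x),n\}$ for $x>0$. Since $w$ is continuous and non-negative, $w_n$ is measurable with $0 \le w_n \le n$, so $w_n \in L^\infty(0,\infty)$ is admissible in \eqref{D1}. Moreover $w_n$ inherits from $w$ the property of being non-increasing, and $w_n \nearrow w$ pointwise on $(0,\infty)$. The monotonicity of $w_n$ gives $w_n(x+y) \le w_n(x)$ for all $(x,y)\in(0,\infty)^2$, hence
\begin{equation*}
-\chi_{w_n}(x,y) = w_n(x) + w_n(y) - w_n(x+y) \ge w_n(x)\,.
\end{equation*}
Combining this with the lower bound $K(x,y) \ge k_1 y^\lambda$ from \eqref{hypK} and using $\vartheta=w_n$ in \eqref{D1}, I obtain
\begin{equation*}
\int_0^\infty w_n(x) S(x)\,\mathrm{d}x \ge \frac{1}{2}\int_0^\infty\int_0^\infty w_n(x) K(x,y)\varphi(x)\varphi(y)\,\mathrm{d}y\mathrm{d}x \ge \frac{k_1 M_\lambda(\varphi)}{2}\int_0^\infty w_n(x) \varphi(x)\,\mathrm{d}x\,.
\end{equation*}

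If $M_\lambda(\varphi)=0$, then $\varphi\equiv 0$ and the conclusion is trivial. Otherwise, using $w_n\le w$ and dividing by $M_\lambda(\varphi)/2>0$ yields the uniform bound
\begin{equation*}
\int_0^\infty w_n(x) \varphi(x)\,\mathrm{d}x \le \frac{2}{k_1 M_\lambda(\varphi)}\int_0^\infty w(x) S(x)\,\mathrm{d}x\,,
\end{equation*}
whose right-hand side is finite by assumption and independent of $n$. Since $w_n\varphi \nearrow w\varphi$ pointwise as $n\to\infty$, monotone convergence gives $\varphi \in L^1((0,\infty), w(x)\,\mathrm{d}x)$, as required. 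The particular case $S\in X_m$ with $m<0$ follows by applying the result to the weight $w(x):=x^m$, which is continuous, non-negative, and non-increasing on $(0,\infty)$.
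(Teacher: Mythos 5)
Your proof is correct and follows essentially the same strategy as the paper's: test \eqref{D1} against a bounded, non-increasing surrogate for $w$, use the monotonicity of the surrogate to bound $-\chi$ from below by its value at $x$, and invoke the lower bound in \eqref{hypK} to produce a multiple of $M_\lambda(\varphi)$ times the weighted integral of $\varphi$. The only difference is the regularization device: the paper shifts the argument, taking $w_\varepsilon(x)=w(x+\varepsilon)$ and concluding via Fatou's lemma, whereas you truncate from above with $\min\{w,n\}$ and conclude via monotone convergence --- both work equally well here.
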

%%%%%%%%%%%%%%%%

\begin{proof}
	Proposition~\ref{prc2} being obvious when $\varphi\equiv 0$, we may thus assume that $\varphi\not\equiv 0$. Consider $\varepsilon\in (0,1)$ and set $w_\varepsilon(x) := w(x+\varepsilon)$ for $x>0$. Owing to the monotonicity of $w$, there holds $w(x+\varepsilon)\le w(\varepsilon)$ for $x>0$ and
	\begin{equation*}
	-\chi_{w_\varepsilon}(x,y) = w(x+\varepsilon) + w(y+\varepsilon) - w(x+y+\varepsilon) \ge w(x+\varepsilon) \ge 0\,, \qquad (x,y)\in (0,\infty)^2\,.
	\end{equation*}
	We may then take $\vartheta=w_\varepsilon$ in \eqref{D1} and use the above inequality, the symmetry of $K$, and \eqref{hypK} to obtain
	\begin{align*}
	\int_0^\infty w_\varepsilon(x) S(x)\ \mathrm{d}x & = - \frac{1}{2} \int_0^\infty \int_0^\infty \chi_{w_\varepsilon}(x,y) K(x,y) \varphi(y) \varphi(x)\ \mathrm{d}y \mathrm{d}x \\
	& \ge k_1 \int_0^\infty \int_0^\infty \chi_{w_\varepsilon}(x,y) y^\lambda \varphi(y) \varphi(x)\ \mathrm{d}y \mathrm{d}x \\
	& \ge k_1 M_\lambda(\varphi) \int_0^\infty w(x+\varepsilon) \varphi(x)\ \mathrm{d}x\,.
	\end{align*}
	We then let $\varepsilon\to 0$ in the previous inequality and deduce from Fatou's lemma that
	\begin{equation*}
	\int_0^\infty w(x) S(x)\ \mathrm{d}x \ge k_1 M_\lambda(\varphi) \int_0^\infty w(x) \varphi(x)\ \mathrm{d}x\,,
	\end{equation*}
	thereby completing the proof, since $M_\lambda(\varphi)$ is finite and positive.
\end{proof}

We end up this section with upper and lower bounds on the moments of order zero and $\lambda$ of stationary solutions to \eqref{scews1}. 

%%%%%%%%%%%%%%%%
\begin{proposition}\label{prD2}
Let $\varphi$ be a stationary solution to \eqref{scews1}. Then
\begin{equation}
k_1 M_0(\varphi) M_\lambda(\varphi) \le M_0(S) \le k_2 M_0(\varphi) M_\lambda(\varphi)\,. \label{D2a}
\end{equation}
Assume further that $\lambda\in [0,1)$ and $S\in X_\lambda$. Then
\begin{equation}
\frac{2^\lambda}{k_2} M_\lambda(S) \le M_\lambda(\varphi)^2 \le \frac{2^{1-\lambda}}{k_1 (2-2^\lambda)} M_\lambda(S)\,. \label{D2b}
\end{equation}
\end{proposition}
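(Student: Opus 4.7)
The plan is to test the weak identity \eqref{D1} against two bounded weights modeled on $\vartheta(x)=1$ and $\vartheta(x)=x^\lambda$, and to combine the resulting integral identities with the two-sided bound \eqref{hypK}.

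For \eqref{D2a}, I would take the constant weight $\vartheta\equiv 1\in L^\infty(0,\infty)$, so that $\chi_\vartheta\equiv -1$ and \eqref{D1} reduces to
\begin{equation*}
M_0(S) = \frac{1}{2}\int_0^\infty\int_0^\infty K(x,y)\varphi(x)\varphi(y)\,\mathrm{d}y\mathrm{d}x.
\end{equation*}
Sandwiching $K$ between its bounds from \eqref{hypK} and using Fubini together with the identity $\int_0^\infty\int_0^\infty(x^\lambda+y^\lambda)\varphi(x)\varphi(y)\,\mathrm{d}y\mathrm{d}x = 2 M_0(\varphi) M_\lambda(\varphi)$ immediately yields \eqref{D2a}.

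For \eqref{D2b}, the natural weight $\vartheta(x)=x^\lambda$ is unbounded, so I would test \eqref{D1} with the truncation $\vartheta_A(x):=\min\{x^\lambda,A\}\in L^\infty(0,\infty)$, $A>0$, and then pass to the limit $A\to\infty$. A short case analysis (depending on whether $x^\lambda$, $y^\lambda$, and $(x+y)^\lambda$ exceed $A$) shows that $\vartheta_A$ is both subadditive and non-decreasing, so that $-\chi_{\vartheta_A}\ge 0$ and, for $y\le x$,
\begin{equation*}
-\chi_{\vartheta_A}(x,y) = \vartheta_A(x) + \vartheta_A(y) - \vartheta_A(x+y) \le \vartheta_A(y) \le \min\{x,y\}^\lambda.
\end{equation*}
Combined with \eqref{hypK} and the elementary bound $\min\{x,y\}^\lambda(x^\lambda+y^\lambda)\le 2(xy)^\lambda$, this produces the $A$-independent majorant $-\chi_{\vartheta_A}(x,y) K(x,y)\le 2k_2(xy)^\lambda$, which is integrable against $\varphi(x)\varphi(y)\,\mathrm{d}y\mathrm{d}x$ because $\int_0^\infty\int_0^\infty(xy)^\lambda\varphi(x)\varphi(y)\,\mathrm{d}y\mathrm{d}x = M_\lambda(\varphi)^2<\infty$. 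Since $-\chi_{\vartheta_A}(x,y)\to x^\lambda+y^\lambda-(x+y)^\lambda$ pointwise as $A\to\infty$, dominated convergence on the coagulation integral, together with monotone convergence on $\int_0^\infty S(x)\vartheta_A(x)\,\mathrm{d}x$ (where the hypothesis $S\in X_\lambda$ guarantees a finite limit), produce
\begin{equation*}
M_\lambda(S) = \frac{1}{2}\int_0^\infty\int_0^\infty\bigl[x^\lambda+y^\lambda-(x+y)^\lambda\bigr] K(x,y)\varphi(x)\varphi(y)\,\mathrm{d}y\mathrm{d}x.
\end{equation*}

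It remains to bracket the integrand via the sharp pointwise inequality
\begin{equation*}
2^\lambda(2-2^\lambda)(xy)^\lambda \le \bigl[x^\lambda+y^\lambda-(x+y)^\lambda\bigr](x^\lambda+y^\lambda) \le 2^{1-\lambda}(xy)^\lambda,
\end{equation*}
valid for $\lambda\in[0,1)$ and $x,y>0$; inserting it together with \eqref{hypK} into the identity above and using $\int_0^\infty\int_0^\infty(xy)^\lambda\varphi(x)\varphi(y)\,\mathrm{d}y\mathrm{d}x = M_\lambda(\varphi)^2$ then delivers \eqref{D2b}. After setting $t:=y/x$ and invoking the symmetry under $t\mapsto 1/t$, this pointwise bound reduces to a one-variable inequality on $t\in(0,1]$ that can be checked by elementary calculus. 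I expect the rigorous verification of these pointwise bounds with their precise constants, which are exactly tuned to produce the prefactors $2^{1-\lambda}/[k_1(2-2^\lambda)]$ and $2^\lambda/k_2$ appearing in \eqref{D2b}, to be the main technical hurdle.
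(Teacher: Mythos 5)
Your proposal is correct and follows essentially the same route as the paper: testing \eqref{D1} with $\vartheta\equiv 1$ gives \eqref{D2a}, and testing with a bounded truncation of $x^\lambda$, passing to the limit $A\to\infty$, and applying the two-sided algebraic bounds on $\bigl[x^\lambda+y^\lambda-(x+y)^\lambda\bigr](x^\lambda+y^\lambda)$ (which the paper records as \eqref{D4}--\eqref{D5}, citing van Dongen--Ernst rather than re-deriving them by calculus) gives \eqref{D2b}. Your single $A$-independent majorant $2k_2(xy)^\lambda$, obtained from the subadditivity and monotonicity of $\vartheta_A$, is a slightly tidier justification of the limit passage than the paper's four-region splitting of the double integral, but the substance is identical.
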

%%%%%%%%%%%%%%%%

\begin{proof}
First, it follows from \eqref{D1} with the choice $\vartheta\equiv 1$ that
\begin{equation*}
M_0(S) = \frac{1}{2} \int_0^\infty \int_0^\infty K(x,y) \varphi(x) \varphi(y)\ \mathrm{d}y\mathrm{d}x\,.
\end{equation*}	
Combining the above identity with \eqref{hypK} readily gives \eqref{D2a}.

Next, the bounds \eqref{D2b} formally follow from \eqref{hypK} and \eqref{D1} with $\vartheta(x)=x^\lambda$, $x>0$. This function being not bounded, an approximation is needed. Specifically, let $A>0$ and set $\vartheta_A(x) = \min\{x^\lambda,A^\lambda\}$ for $x>0$. We infer from \eqref{D1} and the symmetry of $K$ that
\begin{equation}
\begin{split}
\int_0^\infty \vartheta_A(x) S(x)\ \mathrm{d}x & = \frac{1}{2} \int_0^A \int_0^{A-x} \left[ x^\lambda + y^\lambda - (x+y)^\lambda \right] K(x,y) \varphi(x) \varphi(y)\ \mathrm{d}y\mathrm{d}x \\
& \quad + \frac{1}{2} \int_0^A \int_{A-x}^A \left[ x^\lambda + y^\lambda - A^\lambda \right] K(x,y) \varphi(x) \varphi(y)\ \mathrm{d}y\mathrm{d}x \\ 
& \quad + \int_0^A \int_A^\infty x^\lambda K(x,y) \varphi(x) \varphi(y)\ \mathrm{d}y\mathrm{d}x \\
& \quad + \frac{A^\lambda}{2} \int_A^\infty \int_A^\infty  K(x,y) \varphi(x) \varphi(y)\ \mathrm{d}y\mathrm{d}x \,.
\end{split} \label{D3}
\end{equation} 

We now identify the limit as $A\to\infty$ of each term on the right hand side of \eqref{D3}. We first recall the following algebraic inequalities
\begin{equation}
2^\lambda (2-2^\lambda) \frac{(xy)^\lambda}{(x+y)^\lambda} \le x^\lambda + y^\lambda - (x+y)^\lambda \le \frac{(xy)^\lambda}{(x+y)^\lambda}\,, \qquad (x,y)\in (0,\infty)^2\,, \label{D4}
\end{equation}
see \cite[Eq.~(9)]{vDEr1985}, and
\begin{equation}
2^{\lambda-1} \left( x^\lambda + y^\lambda \right) \le (x+y)^\lambda \le x^\lambda + y^\lambda\,, \qquad (x,y)\in (0,\infty)^2\,, \label{D5} 
\end{equation}
which are valid due to $\lambda\in [0,1)$. We deduce from \eqref{hypK}, \eqref{D4}, and \eqref{D5} that
\begin{align*}
0 & \le \mathbf{1}_{(0,A)}(x) \mathbf{1}_{(0,A-x)}(y) \left[ x^\lambda + y^\lambda - (x+y)^\lambda \right] K(x,y) \varphi(x) \varphi(y) \\
& \le k_2 (xy)^\lambda \frac{x^\lambda+y^\lambda}{(x+y)^\lambda} \varphi(x) \varphi(y) \le 2^{1-\lambda} (xy)^\lambda \varphi(x)\varphi(y)\,.
\end{align*}
Since $\varphi\in X_\lambda$ and 
\begin{equation*}
\lim_{A\to\infty} \mathbf{1}_{(0,A)}(x) \mathbf{1}_{(0,A-x)}(y) = 1\,, \qquad (x,y)\in (0,\infty)^2\,,
\end{equation*}
Lebesgue's dominated convergence theorem entails that
\begin{align*}
& \lim_{A\to\infty} \frac{1}{2} \int_0^A \int_0^{A-x} \left[ x^\lambda + y^\lambda - (x+y)^\lambda \right] K(x,y) \varphi(x) \varphi(y)\ \mathrm{d}y\mathrm{d}x \\
& \qquad\qquad = \frac{1}{2} \int_0^\infty \int_0^\infty \left[ x^\lambda + y^\lambda - (x+y)^\lambda \right] K(x,y) \varphi(x) \varphi(y)\ \mathrm{d}y\mathrm{d}x\,.
\end{align*}
Next, by \eqref{hypK},
\begin{align*}
0 & \le \mathbf{1}_{(0,A)}(x) \mathbf{1}_{(A-x,A)}(y) \left[ x^\lambda + y^\lambda - A^\lambda \right] K(x,y) \varphi(x) \varphi(y) \\
& \le k_2  \left[ x^\lambda + y^\lambda - A^\lambda \right] \left( x^\lambda+y^\lambda \right) \varphi(x) \varphi(y) \\
& \le k_2 \left( x^\lambda y^\lambda + y^\lambda x^\lambda \right) \varphi(x)\varphi(y) = 2 k_2 (xy)^\lambda \varphi(x) \varphi(y)\,.
\end{align*}
Since $\varphi\in X_\lambda$ and 
\begin{equation*}
\lim_{A\to\infty} \mathbf{1}_{(0,A)}(x) \mathbf{1}_{(A-x,A)}(y) = 0\,, \qquad (x,y)\in (0,\infty)^2\,,
\end{equation*}
we use again Lebesgue's dominated convergence theorem to obtain
\begin{equation*}
\lim_{A\to\infty} \frac{1}{2} \int_0^A \int_{A-x}^A \left[ x^\lambda + y^\lambda - A^\lambda \right] K(x,y) \varphi(x) \varphi(y)\ \mathrm{d}y\mathrm{d}x = 0\,.
\end{equation*}
Finally, using once more \eqref{hypK},
\begin{align*}
0 & \le \int_0^A \int_A^\infty x^\lambda K(x,y) \varphi(x) \varphi(y)\ \mathrm{d}y\mathrm{d}x \le k_2 \int_0^A \int_A^\infty x^\lambda \left( x^\lambda + y^\lambda \right) \varphi(x) \varphi(y)\ \mathrm{d}y\mathrm{d}x \\
& \le 2 k_2 \int_0^A \int_A^\infty (xy)^\lambda \varphi(x) \varphi(y)\ \mathrm{d}y\mathrm{d}x \le 2 k_2 M_\lambda(\varphi) \int_A^\infty y^\lambda \varphi(y)\ \mathrm{d}y
\end{align*}
and
\begin{align*}
0 & \le A^\lambda \int_A^\infty \int_A^\infty K(x,y) \varphi(x) \varphi(y)\ \mathrm{d}y\mathrm{d}x \le k_2 A^\lambda \int_A^\infty \int_A^\infty \left( x^\lambda + y^\lambda \right) \varphi(x) \varphi(y)\ \mathrm{d}y\mathrm{d}x \\
& \le 2 k_2 \int_A^\infty \int_A^\infty (xy)^\lambda \varphi(x) \varphi(y)\ \mathrm{d}y\mathrm{d}x \le 2 k_2 M_\lambda(\varphi) \int_A^\infty y^\lambda \varphi(y)\ \mathrm{d}y\,,
\end{align*}
from which we deduce that
\begin{align*}
& \lim_{A\to\infty} \int_0^A \int_A^\infty x^\lambda K(x,y) \varphi(x) \varphi(y)\ \mathrm{d}y\mathrm{d}x = 0\,, \\
& \lim_{A\to\infty} \frac{A^\lambda}{2} \int_A^\infty \int_A^\infty  K(x,y) \varphi(x) \varphi(y)\ \mathrm{d}y\mathrm{d}x = 0\,,
\end{align*}
recalling that $\varphi\in X_\lambda$. Collecting the above information, we may take the limit $A\to\infty$ in \eqref{D3} and obtain, since $S\in X_\lambda$, 
\begin{equation}
M_\lambda(S) = \frac{1}{2} \int_0^\infty \int_0^\infty \left[ x^\lambda + y^\lambda - (x+y)^\lambda \right] K(x,y) \varphi(x) \varphi(y)\ \mathrm{d}y\mathrm{d}x\,. \label{D6}
\end{equation}

Now, we infer from \eqref{hypK}, \eqref{D4}, \eqref{D5}, and \eqref{D6} that \begin{equation*}
2^\lambda (1-2^{\lambda-1}) k_1 M_\lambda(\varphi)^2 \le M_\lambda(S) \le 2^{-\lambda} k_2 M_\lambda(\varphi)^2\,,
\end{equation*}
from which \eqref{D2b} follows.
\end{proof}

%%%%%%%%%%%%%%%%
%%%%%%%%%%%%%%%%
\section{Approximation}\label{sec3}
%%%%%%%%%%%%%%%%
%%%%%%%%%%%%%%%%

Throughout this section, we assume that $\lambda\in [0,1)$ and that the coagulation kernel $K$ satisfies \eqref{hypK} and \eqref{monK}. Also, let $S$ be a source term satisfying 
\begin{equation}
S\in \bigcap_{m\in [0,1)} X_m^+\,, \qquad S\not\equiv 0\,. \label{b2}
\end{equation}
Since $S\in X_0$, it follows from a refined version of the de la Vall\'ee-Poussin theorem \cite{dlVP15}, see \cite{Le1977} or \cite[Theorem~7.1.6]{BLL2019}, that there is a function $\Phi\in C^1([0,\infty))$ depending only on $S$ which satisfies the following properties: $\Phi$ is convex, $\Phi(0)=\Phi'(0)=0$, $\Phi'$ is a concave function which is positive on $(0,\infty)$,
\begin{subequations}\label{v0}
\begin{equation}
\lim_{r\to\infty} \Phi'(r) = \lim_{r\to\infty} \frac{\Phi(r)}{r} = \infty\,, \label{v1}
\end{equation}
and
\begin{equation}
L_\Phi(S) := \int_0^\infty \Phi(S(x))\ \mathrm{d}x < \infty\,. \label{v2}
\end{equation}
\end{subequations}

For $\delta\in (0,1)$, we define 
\begin{equation}
S_\delta = S \mathbf{1}_{(0,1/\delta)} \in X_{0,1+\lambda}^+\,. \label{n1}
\end{equation}

We shall then prove the existence of a stationary solution to the following approximation of \eqref{scews}
\begin{subequations}\label{b4}
\begin{align}
\partial_t f(t,x) & = \mathcal{C} f(t,x) + S_\delta(x) - 2 \delta f(t,x)\,, \qquad (t,x)\in (0,\infty)^2\,, \label{b4a} \\
f(0,x) & = f^{in}(x)\,, \qquad x\in (0,\infty)\,, \label{b4b}
\end{align}
\end{subequations}
 which is a coagulation equation with a truncated source term and an additional efflux term. In \eqref{b4a}, the coagulation operator $\mathcal{C}f$ is still given by \eqref{scews3}.

%%%%%%%%%%%%%%%%
\begin{proposition}\label{prb0}
There is $\delta_0\in (0,1)$  depending only on $\lambda$, $k_1$, $k_2$, and $S$ such that, for $\delta\in (0,\delta_0)$, there is at least one stationary solution $\varphi_\delta\in X_{0,1+\lambda}^+$ to \eqref{b4a} which satisfies the following properties: for all $\vartheta\in L^\infty(0,\infty)$,
\begin{equation}
\frac{1}{2} \int_0^\infty \int_0^\infty \chi_\vartheta(x,y) K(x,y) \varphi_\delta(x)\varphi_\delta(y)\ \mathrm{d}y \mathrm{d}x + \int_0^\infty S_\delta(x) \vartheta(x)\ \mathrm{d}x = 2\delta \int_0^\infty \varphi_\delta(x) \vartheta(x)\ \mathrm{d}x\,, \label{n3}
\end{equation}
the function $\chi_\vartheta$ being defined in \eqref{D1a}, and there are positive constants $\gamma_1>0$ and $\gamma_2>0$ depending only on $\lambda$, $k_1$, $k_2$, and $S$ such that
\begin{equation}
0 < \gamma_1 \le M_\lambda(\varphi_\delta) \le \gamma_2\,, \qquad \int_0^\infty \Phi(\varphi_\delta(x))\ \mathrm{d}x \le \gamma_2\,, \label{c4}
\end{equation}
and, for each $\mu\in [0,(1+\lambda)/2)$, there is a positive constant $\gamma_3(\mu)>0$ depending only on $\lambda$, $k_1$, $k_2$, $S$, and $\mu$ such that
\begin{equation}
M_\mu(\varphi_\delta) \le \gamma_3(\mu)\,. \label{c5}
\end{equation}
\end{proposition}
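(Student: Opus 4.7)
The plan is to construct $\varphi_\delta$ by combining a dynamical approach with an abstract fixed point argument, in the spirit of \cite{EMRR2005, Laur2019}. I first solve the initial value problem \eqref{b4} in $X_{0,1+\lambda}^+$: since $S_\delta\in X_{0,1+\lambda}$, the kernel $K$ is sublinear (recall $\lambda<1$), and the efflux $-2\delta f$ provides additional coercivity, the existence and uniqueness theory for Smoluchowski's equation \cite{EsMi2006, Stew1989} produces a non-negative semi-group $(\Sigma_\delta(t))_{t\ge 0}$ on $X_{0,1+\lambda}^+$. The goal is then to exhibit a non-empty, convex, sequentially weakly compact subset $\mathcal{Z}_\delta$ of $X_0^+$ that is invariant under this semi-group and on which $\Sigma_\delta(t)$ is weakly continuous for each $t>0$; the variant of Tychonov's theorem stated in \cite[Theorem~1.2]{EMRR2005} will then furnish a stationary element $\varphi_\delta\in\mathcal{Z}_\delta$ satisfying \eqref{n3}.

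The \emph{a priori} estimates producing $\mathcal{Z}_\delta$ come from the time-dependent analogues of the moment identities used in Section~\ref{sec2}. Testing \eqref{b4a} with $\vartheta(x)=x^m$ and symmetrizing as in Definition~\ref{defD1} yields
\begin{equation*}
\frac{d}{dt}M_m(f) + 2\delta M_m(f) = \frac{1}{2}\int_0^\infty\!\!\int_0^\infty\bigl[(x+y)^m - x^m - y^m\bigr] K(x,y) f(x) f(y)\,dy\,dx + M_m(S_\delta).
\end{equation*}
For $m=\lambda$, the inequalities \eqref{hypK}, \eqref{D4}, and \eqref{D5} used in Proposition~\ref{prD2} yield a Riccati-type differential inequality with negative quadratic feedback, so that $M_\lambda(\Sigma_\delta(t)f^{in})$ is bounded above by a constant $\gamma_2$ depending only on $\lambda$, $k_1$, $k_2$, and $S$, uniformly in $\delta\in(0,1)$ and $t\ge 0$ (for initial data in a suitable ball). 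For $m\in[0,1)$ the integrand $(x+y)^m - x^m - y^m$ is non-positive; combined with the efflux and the just-obtained upper bound on $M_\lambda$, this gives uniform control on $M_m(\Sigma_\delta(t)f^{in})$, and a truncation argument patterned on the proof of Proposition~\ref{prc1} extends this up to $m<(1+\lambda)/2$, producing the constants $\gamma_3(\mu)$ of \eqref{c5}.

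The crucial ingredient is the uniform-integrability bound $L_\Phi(\Sigma_\delta(t)f^{in})\le\gamma_2$, which is where the monotonicity condition \eqref{monK} enters in the spirit of the $L^p$-framework of \cite{Buro1983, LaMi02c, MiRR03}. With $\Phi$ obtained from \eqref{v0}--\eqref{v2} applied to $S$, formally testing \eqref{b4a} with $\vartheta=\Phi'(f)$ and exploiting the concavity (hence subadditivity) of $\Phi'$ together with $K(x-y,y)\le K(x,y)$ produces a differential inequality of the form
\begin{equation*}
\frac{d}{dt}L_\Phi(f) + 2\delta L_\Phi(f) \le C\,M_\lambda(f)\,L_\Phi(f) + C(\Phi,S),
\end{equation*}
which, combined with the uniform bound on $M_\lambda$ and Gronwall's inequality, gives the claim. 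Setting
\begin{equation*}
\mathcal{Z}_\delta := \bigl\{ f\in X_{0,1+\lambda}^+ : M_\lambda(f)\le\gamma_2,\ M_\mu(f)\le\gamma_3(\mu),\ L_\Phi(f)\le\gamma_2\bigr\}
\end{equation*}
for a fixed $\mu\in(0,(1+\lambda)/2)$ then yields a set that is convex, non-empty (it contains the long-time asymptotic image of a suitable initial datum), and, by the Dunford--Pettis theorem, sequentially weakly compact in $X_0$: the $L_\Phi$ bound rules out concentration and the $M_\mu$ bound rules out mass escape to infinity.

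The principal obstacle is the weak continuity of $\Sigma_\delta$ on $\mathcal{Z}_\delta$: if $f_n^{in}\rightharpoonup f^{in}$ in $X_0$, passing to the limit in the quadratic coagulation integral along the trajectories $(\Sigma_\delta(t)f_n^{in})_{t\in[0,1]}$ is not a consequence of weak convergence alone and typically requires the uniform integrability furnished by $L_\Phi$, the tightness furnished by $M_\mu$, and a strong-compactness-in-time argument extracted from \eqref{b4a} and the already-established moment bounds. Once this is in place, \cite[Theorem~1.2]{EMRR2005} supplies $\varphi_\delta\in\mathcal{Z}_\delta$ satisfying \eqref{n3}; the upper bound in \eqref{c4} and the estimate \eqref{c5} are inherited from $\mathcal{Z}_\delta$, while the lower bound $M_\lambda(\varphi_\delta)\ge\gamma_1$ in \eqref{c4} follows by testing \eqref{n3} with $\vartheta(x)=x^\lambda$ (via an approximation analogous to the proof of Proposition~\ref{prD2}), using the opposite side of \eqref{D4}, and choosing $\delta_0$ so small that $M_\lambda(S_\delta)\ge M_\lambda(S)/2$ for all $\delta<\delta_0$.
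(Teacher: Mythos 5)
Your overall architecture is the one the paper uses (semi-flow for the truncated/damped equation, invariant convex weakly compact set, Tychonov via \cite[Theorem~1.2]{EMRR2005}), but two of your key estimates do not close as written. First, the uniform integrability bound: you write
\begin{equation*}
\frac{d}{dt}L_\Phi(f) + 2\delta L_\Phi(f) \le C\,M_\lambda(f)\,L_\Phi(f) + C(\Phi,S)
\end{equation*}
and invoke Gronwall. With a \emph{positive} coefficient on $M_\lambda(f)L_\Phi(f)$ and only the $O(\delta)$ efflux on the left, Gronwall gives exponential growth in $t$, not a bound uniform in $t\ge 0$, so this cannot produce an invariant set. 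The point of \eqref{monK} (via the convexity inequality of \cite[Lemma~8.2.18]{BLL2019}) is that the coagulation term is \emph{dissipative}: it contributes $-\tfrac{k_1}{2}M_\lambda(f)\,L_\Phi(f)$, and one absorbs $\int\Phi'(f)S$ into this negative term using $s\Phi'(r)\le\Phi(r)+\Phi(s)$. This absorption requires a positive lower bound on $M_\lambda(f_\delta(t))$ \emph{along trajectories}, for all $t$, which is why the paper proves the lower Riccati bound dynamically (Lemma~\ref{lemb4b}) before the $\Phi$-estimate; your plan to recover $M_\lambda(\varphi_\delta)\ge\gamma_1$ only a posteriori from \eqref{n3} is fine for \eqref{c4} itself, but it arrives too late to run the uniform integrability argument on which $\mathcal{Z}_\delta$ depends.

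Second, the bounds $M_\mu(f)\le\gamma_3(\mu)$ for $\mu$ up to $(1+\lambda)/2$ cannot be placed in the definition of $\mathcal{Z}_\delta$ as pointwise-in-time, $\delta$-independent invariants the way you propose. Discarding the (non-positive) coagulation contribution for $m\in[0,1)$ and using the efflux only yields $M_m(f_\delta(t))\le M_m(S)/(2\delta)$, which blows up as $\delta\to0$; an upper bound on $M_\lambda$ does not help, and for $\mu>\lambda$ interpolation against $M_0$ and $M_\lambda$ is unavailable. The mechanism that reaches $\mu<(1+\lambda)/2$ is a quantitative lower bound for $\int\!\!\int[x^m+y^m-(x+y)^m](xy)^{\lambda/2}ff$ in terms of $\big(\int_1^\infty x^\mu f\big)^2$ with $\mu<(m+\lambda)/2$ (the paper's Lemma~\ref{lemb3}, a dyadic Cauchy--Schwarz argument); since the dissipated quantity is a different moment from the evolved one, this yields only \emph{time-averaged} control along trajectories. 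The paper therefore builds $\mathcal{Z}_\delta$ out of the $\delta$-dependent pointwise bounds (including $M_1$ and $M_{1+\lambda}$, which you omit but which are needed for $\mathcal{Z}_\delta$ to be weakly closed in $X_0$ and for the weak continuity of the semi-flow on $X_{0,1+\lambda}^+$), and recovers \eqref{c2}, \eqref{c3}, \eqref{c5} only at the fixed point by letting $t\to\infty$ in the time-averaged inequalities, where stationarity turns them into pointwise bounds. Your ``truncation argument patterned on Proposition~\ref{prc1}'' does not supply this missing lemma, and without it the invariance of your $\mathcal{Z}_\delta$ is not established.
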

%%%%%%%%%%%%%%%%

As in \cite{FLNV}, the proof of Proposition~\ref{prb0} relies on a dynamical approach. As already outlined in the Introduction, it amounts  to prove that the coagulation equation with source and efflux terms \eqref{b4} is well-posed in an appropriately defined subset of $X_0$, which is here chosen to be $X_{0,1+\lambda}^+$, and generates a semi-flow $\Psi_\delta(\cdot,f^{in})$ on that set endowed with the weak topology of $X_0$, while leaving invariant a closed convex and weakly compact subset $\mathcal{Z}_\delta$. We then deduce from an application of Tychonov's fixed point theorem, see \cite[Theorem~1.2]{EMRR2005}, that the semi-flow $\Psi_\delta$ has a fixed point in $\mathcal{Z}_\delta$, which is obviously a stationary solution to \eqref{b4a}. To set up the stage for the proof of Proposition~\ref{prb0}, we first state the well-posedness of \eqref{b4} in $X_{0,1+\lambda}^+$.

%%%%%%%%%%%%%%%%
\begin{proposition}\label{prb00}
Let $\delta\in (0,1)$. Given $f^{in}\in X_{0,1+\lambda}^+$, there is a unique weak solution $f_\delta = \Psi_\delta(\cdot,f^{in})$ to \eqref{b4} satisfying
\begin{align*}
f_\delta & \in C([0,\infty),X_0^+)\,, \qquad f_\delta(0) = f^{in}\,, \\
f_\delta & \in W^{1,\infty}((0,T),X_0) \cap L^\infty((0,T),X_{1+\lambda})\,, \qquad T>0\,,
\end{align*}
and
\begin{equation}
\begin{split}
\frac{\mathrm{d}}{\mathrm{d}t} \int_0^\infty f_\delta(t,x) \vartheta(x)\ \mathrm{d}x & = \frac{1}{2} \int_0^\infty \int_0^\infty \chi_\vartheta(x,y) K(x,y) f_\delta(t,x) f_\delta(t,y)\ \mathrm{d}y \mathrm{d}x \\
& \qquad + \int_0^\infty S_\delta(x) \vartheta(x)\ \mathrm{d}x - 2\delta \int_0^\infty f_\delta(t,x) \vartheta(x)\ \mathrm{d}x
\end{split} \label{wFd}
\end{equation}
for all $t>0$ and $\vartheta\in L^\infty(0,\infty)$. Moreover, if $R>0$ and $(f_j^{in})_{j\ge 1}$ is a sequence in $\{ h\in X_{0,1+\lambda}^+\ :\ M_{1+\lambda}(h) \le R\}$ which converges weakly in $X_0$ to $f^{in}$, then $(\Psi_\delta(\cdot,f_j^{in}))_{j\ge 1}$ converges to $\Psi_\delta(\cdot,f^{in})$ in $C([0,T],X_{0,w})$ for any $T>0$, where $X_{0,w}$ denotes the space $X_0$ endowed with its weak topology.
\end{proposition}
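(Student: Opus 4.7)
The plan is to prove Proposition~\ref{prb00} by a truncation-compactness scheme in the spirit of \cite{Stew1989, LaMi02c} and \cite[Chapter~7]{BLL2019}, exploiting the boundedness of the support of $S_\delta$ and the efflux term to stabilize the higher moments. For $n\ge 1$, set $K_n := K\wedge n$ and consider
$$\partial_t f_n = \mathcal{C}_n f_n + S_\delta - 2\delta f_n\,, \qquad f_n(0) = f^{in}\,.$$
Since $K_n$ is bounded, the right-hand side is locally Lipschitz on $X_0$ and preserves non-negativity (the destruction term in $\mathcal{C}_n$ being linear in $f$), so a classical Banach fixed-point argument yields a unique global non-negative solution $f_n\in C^1([0,\infty);X_0)$.

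To pass to the limit I need $n$-uniform estimates. Taking $\vartheta\equiv 1$ and $\vartheta(x)=x$ in the weak formulation gives, respectively, $\tfrac{d}{dt} M_0(f_n) \le M_0(S_\delta) - 2\delta M_0(f_n)$ and $\tfrac{d}{dt} M_1(f_n) = M_1(S_\delta) - 2\delta M_1(f_n)$, hence uniform control of $M_0(f_n)$ and $M_1(f_n)$. For $\vartheta(x)=x^{1+\lambda}$, the standard convexity inequality $(x+y)^{1+\lambda} - x^{1+\lambda} - y^{1+\lambda} \le C_\lambda(x y^\lambda + x^\lambda y)$ combined with \eqref{hypK} gives
$$\frac{d}{dt} M_{1+\lambda}(f_n) \le C \bigl[ M_{1+\lambda}(f_n) M_\lambda(f_n) + M_1(f_n) M_{2\lambda}(f_n) \bigr] + M_{1+\lambda}(S_\delta)\,;$$
interpolating $M_\lambda$ and $M_{2\lambda}$ between $M_0$, $M_1$ and $M_{1+\lambda}$ turns this into a linear Grönwall inequality, so $\sup_{n\ge 1,\, t\in [0,T]} M_{1+\lambda}(f_n(t))<\infty$ for each $T>0$.

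The main obstacle is the $n$-uniform equiintegrability of $(f_n(t,\cdot))$ in $X_0$, which is exactly where the monotonicity condition \eqref{monK} enters. Using the refined de la Vallée-Poussin theorem, I choose a convex $\Phi_0\in C^1([0,\infty))$ with $\Phi_0(0)=\Phi_0'(0)=0$, $\Phi_0'$ concave, $\Phi_0(r)/r\to\infty$, and $\int \Phi_0(f^{in})\,dx + \int \Phi_0(S_\delta)\,dx<\infty$. Following the arguments of \cite{LaMi02c,MiRR03}, the symmetry of $K$, \eqref{monK}, and the concavity of $\Phi_0'$ combine to show that $t\mapsto \int \Phi_0(f_n(t,x))\,dx$ satisfies a linear differential inequality with source $\int \Phi_0(S_\delta)\,dx$, yielding an $n$-uniform bound on $[0,T]$. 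Dunford-Pettis then provides weak $X_0$-compactness at each time, while \eqref{wFd} tested against $\vartheta\in L^\infty$ gives equicontinuity of $t\mapsto \int f_n(t)\vartheta\,dx$; a variant of Arzelà-Ascoli extracts a subsequence converging in $C([0,T];X_{0,w})$ to some $f_\delta$. Passing to the limit in \eqref{wFd} is immediate for the linear terms, and the equiintegrability together with the $M_{1+\lambda}$-bound handles the coagulation quadratic term after truncating $K$ by its upper bound from \eqref{hypK}.

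For uniqueness, if $f$ and $g$ are two solutions in $X_{0,1+\lambda}^+$ starting from $f^{in}$, testing the weak formulation satisfied by $f-g$ against a suitable regularization of $\mathrm{sign}(f-g)$ and using \eqref{hypK} together with the uniform $M_{1+\lambda}$-bound on $f+g$ produces a linear Grönwall inequality for $\int (1+x^{1+\lambda})|f(t,x)-g(t,x)|\,dx$, forcing $f\equiv g$. Finally, for the stability statement, the compactness argument just described applies to the sequence $(\Psi_\delta(\cdot,f_j^{in}))_j$, whose initial data have equi-bounded $M_{1+\lambda}$ by hypothesis, and yields cluster points in $C([0,T];X_{0,w})$; by the uniqueness above each cluster point must equal $\Psi_\delta(\cdot,f^{in})$, so the full sequence converges.
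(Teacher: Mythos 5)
The paper does not actually prove Proposition~\ref{prb00}: it omits the argument, noting only that stronger versions of the relevant estimates appear in Sections~\ref{sec3.1}--\ref{sec3.2}, and refers to \cite{BLL2019, Dubo1994b, EMRR2005, Stew1989}. Your truncation--compactness scheme is precisely the standard argument from those references, and most of your sketch is sound: the $M_0$, $M_1$, $M_{1+\lambda}$ bounds close as you say (with $M_\lambda\le M_0^{1-\lambda}M_1^{\lambda}$ and, when $2\lambda>1$, $M_1M_{2\lambda}\le C M_{1+\lambda}^{(2\lambda-1)/\lambda}$ sublinear, so the Gr\"onwall inequality is indeed at most linear); the equi-integrability step via \eqref{monK} and de la Vall\'ee-Poussin mirrors Lemma~\ref{lemb5}; and for the stability statement the weak $X_0$-convergence of $(f_j^{in})_j$ supplies, via Dunford--Pettis, a single superlinear $\Phi_0$ with $\sup_j\int\Phi_0(f_j^{in})\,\mathrm{d}x<\infty$, so your compactness argument is uniform in $j$ and uniqueness identifies the limit.

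The one step that fails as written is the uniqueness weight. With $w(x)=1+x^{1+\lambda}$, the standard sign-function computation leads to controlling $\frac{1}{2}\int_0^\infty\int_0^\infty\left[w(x+y)-w(x)+w(y)\right]K(x,y)\,|(f-g)(x)|\,(f+g)(y)\,\mathrm{d}y\,\mathrm{d}x$, and the contribution of $w(y)K(x,y)\le k_2(1+y^{1+\lambda})(x^\lambda+y^\lambda)$ contains the term $y^{1+2\lambda}\,|(f-g)(x)|\,(f+g)(y)$, whose integral is $M_{1+2\lambda}(f+g)\,\|f-g\|_{X_0}$. Since $1+2\lambda>1+\lambda$ for $\lambda>0$, this moment is not controlled in the class $L^\infty((0,T),X_{0,1+\lambda})$ and your Gr\"onwall inequality does not close. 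The repair is standard (Norris/Stewart-type uniqueness): take the sublinear weight $w(x)=1+x^\lambda$, for which $w(x+y)-w(x)+w(y)\le 2w(y)$ and $K(x,y)\le k_2\,w(x)w(y)$, so the Gr\"onwall constant becomes $k_2\sup_{t}\int_0^\infty w(x)^2(f+g)(t,x)\,\mathrm{d}x\le C\left(M_0+M_{2\lambda}\right)(f+g)$, which is finite because $2\lambda<1+\lambda$; the weight $1+x$ works as well. With that substitution your proof is complete and is the one the paper implicitly has in mind.
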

%%%%%%%%%%%%%%%%

 Since the proof of Proposition~\ref{prb00} follows the same lines as that of similar results for coagulation-fragmentation equations and stronger versions of most of the estimates involved in it are derived in Sections~\ref{sec3.1}-\ref{sec3.2}, we omit the proof here and refer instead to \cite{BLL2019, Dubo1994b, EMRR2005, Stew1989} and the references therein. Let us also mention here that the well-posedness of the discrete coagulation-fragmentation equations with source and efflux terms is specifically studied in \cite{KuTh2019, Spou1985b}.

\bigskip

\newcounter{NumConst}

In the following, $C$ and $(C_i)_{i\ge 1}$ denote positive constant depending only on $\lambda$, $k_1$, $k_2$, and $S$. Dependence upon additional parameters will be indicated explicitly. Also, for $m\in\mathbb{R}$ and $x\in (0,\infty)$, we set $\vartheta_m(x) := x^m$ and $\chi_m := \chi_{\vartheta_m}$.

%%%%%%%%%%%%%%%%
%%%%%%%%%%%%%%%%
\subsection{Moment estimates}\label{sec3.1}
%%%%%%%%%%%%%%%%
%%%%%%%%%%%%%%%%

We begin with a bound on the moment of order $\lambda$ which depends, neither on $\delta\in (0,1)$, nor on $t>0$.

\refstepcounter{NumConst}\label{cst1}

%%%%%%%%%%%%%%%%
\begin{lemma}\label{lemb1}
There is $C_{\ref{cst1}}>0$ such that, if 
\begin{equation}
M_\lambda(f^{in})\le C_{\ref{cst1}} := \sqrt{\frac{2 M_\lambda(S)}{(1-2^{\lambda-1})k_1}}\,, \label{inv1}
\end{equation} 
then 
\begin{equation*}
M_\lambda(f_\delta(t)) \le C_{\ref{cst1}}\,, \qquad t\ge 0\,.
\end{equation*} 
\end{lemma}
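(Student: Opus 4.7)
The plan is to apply the weak formulation \eqref{wFd} with the moment weight $\vartheta(x) = x^\lambda$, derive a Bernoulli-type differential inequality for $\rho(t) := M_\lambda(f_\delta(t))$, and exploit the fact that $C_{\ref{cst1}}$ is tailored so that this inequality reads $\rho' \le 0$ whenever $\rho \ge C_{\ref{cst1}}$.

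First I would rigorously justify inserting $\vartheta(x)=x^\lambda$ into \eqref{wFd}. Since this weight is unbounded, I proceed as in the proof of Proposition~\ref{prD2} by taking $\vartheta_A(x):=\min\{x^\lambda,A^\lambda\}$ in the integrated-in-time form of \eqref{wFd} on $[0,t]$ and passing to the limit $A\to\infty$ by Lebesgue's dominated convergence theorem. The required domination is supplied by $f_\delta\in L^\infty((0,T),X_{1+\lambda})\cap C([0,T],X_0)$, which yields $f_\delta\in L^\infty((0,T),X_\lambda)$ by interpolation, combined with the upper bound $K(x,y)\le k_2(x^\lambda+y^\lambda)$ from \eqref{hypK}. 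One concludes that $\rho$ is absolutely continuous on $[0,T]$ and satisfies
\begin{equation*}
\rho'(t) = \frac{1}{2}\int_0^\infty\int_0^\infty \chi_\lambda(x,y) K(x,y) f_\delta(t,x) f_\delta(t,y)\,\mathrm{d}y\,\mathrm{d}x + M_\lambda(S_\delta) - 2\delta\rho(t)
\end{equation*}
for a.e.\ $t\ge 0$.

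Next I would combine the lower bound in \eqref{D4} with the inequality $K(x,y)\ge k_1(x^\lambda+y^\lambda)\ge k_1(x+y)^\lambda$ (the latter coming from the right-hand side of \eqref{D5}) to obtain the pointwise estimate
\begin{equation*}
\chi_\lambda(x,y) K(x,y) \le -k_1 2^\lambda(2-2^\lambda) (xy)^\lambda\,.
\end{equation*}
Discarding the nonpositive $-2\delta\rho$ term and using $M_\lambda(S_\delta)\le M_\lambda(S)$ then gives
\begin{equation*}
\rho'(t) \le -k_1 2^\lambda(1-2^{\lambda-1}) \rho(t)^2 + M_\lambda(S)\,.
\end{equation*}
The identity $C_{\ref{cst1}}^2 = 2M_\lambda(S)/[(1-2^{\lambda-1})k_1]$ is chosen precisely so that the right-hand side equals $(1-2^{\lambda+1})M_\lambda(S) \le -M_\lambda(S) < 0$ at $\rho=C_{\ref{cst1}}$, and it remains strictly negative for every $\rho\ge C_{\ref{cst1}}$.

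The conclusion then follows by a standard barrier argument: assuming for contradiction that $\rho(0)\le C_{\ref{cst1}}$ but $\rho(t_*)>C_{\ref{cst1}}$ for some $t_*>0$, and setting $t_1:=\sup\{t\in[0,t_*]:\rho(t)\le C_{\ref{cst1}}\}$, continuity gives $\rho(t_1)=C_{\ref{cst1}}$ while $\rho\ge C_{\ref{cst1}}$ on $[t_1,t_*]$, whence $\rho'\le 0$ a.e.\ on that interval; integrating yields $\rho(t_*)\le\rho(t_1)=C_{\ref{cst1}}$, a contradiction. The only genuine technical step is the truncation justification that promotes \eqref{wFd} from bounded to $x^\lambda$ test functions; once the differential inequality and the sharp identification of the zero of its right-hand side are in hand, the comparison step is essentially automatic.
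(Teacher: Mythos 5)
Your proof is correct and reaches the same Riccati-type inequality and comparison/barrier conclusion as the paper, but the key dissipation estimate is derived by a genuinely different route. The paper follows \cite{FoLa2005}: it rewrites $\tfrac12\int_0^\infty\int_0^\infty[x^\lambda+y^\lambda-(x+y)^\lambda]K\,f_\delta f_\delta\,\mathrm{d}y\,\mathrm{d}x$ as $\int_0^\infty\int_0^\infty x[x^{\lambda-1}-(x+y)^{\lambda-1}]K\,f_\delta f_\delta\,\mathrm{d}y\,\mathrm{d}x$, restricts to the region $\{y>x\}$, and bounds $(x+y)^{\lambda-1}\le(2x)^{\lambda-1}$ there, which yields the coefficient $(1-2^{\lambda-1})k_1/2$ in front of $M_\lambda(f_\delta)^2$; with that coefficient $C_{\ref{cst1}}$ is exactly the equilibrium of the comparison ODE and the conclusion is $M_\lambda(f_\delta(t))\le\max\{M_\lambda(f^{in}),C_{\ref{cst1}}\}$. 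You instead combine the van Dongen--Ernst inequality \eqref{D4} with the subadditivity bound $(x+y)^\lambda\le x^\lambda+y^\lambda$ from \eqref{D5} (the same pair the paper deploys later in Proposition~\ref{prD2}), obtaining the larger coefficient $2^\lambda(1-2^{\lambda-1})k_1$; your differential inequality is therefore strictly stronger, $C_{\ref{cst1}}$ is a strict supersolution rather than the exact equilibrium, and your barrier argument closes with strict inequality (using $M_\lambda(S)>0$, which is guaranteed by \eqref{b2}). Both computations check out, so the two proofs differ only in which algebraic inequality produces the quadratic lower bound; yours happens to give a sharper constant, while the paper's choice makes $C_{\ref{cst1}}$ the natural fixed point of the ODE. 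A welcome addition on your side is the explicit truncation $\vartheta_A=\min\{x^\lambda,A^\lambda\}$ justifying the unbounded test function in \eqref{wFd}, a step the paper passes over silently; the domination you invoke indeed works since $|\chi_{\vartheta_A}|K\le 4k_2(x^{2\lambda}+y^{2\lambda})$ and $f_\delta\in L^\infty((0,T),X_0\cap X_{1+\lambda})\subset L^\infty((0,T),X_{2\lambda})$ because $2\lambda<1+\lambda$.
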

%%%%%%%%%%%%%%%%

\begin{proof}
Let $t>0$. It follows from \eqref{wFd} that
\begin{equation*}
\frac{\mathrm{d}}{\mathrm{d}t} M_\lambda(f_\delta(t)) - \frac{1}{2} \int_0^\infty \int_0^\infty \chi_{\lambda}(x,y) K(x,y) f_\delta(t,x) f_\delta(t,y)\ \mathrm{d}y \mathrm{d}x = M_\lambda(S_\delta) - 2\delta M_\lambda(f_\delta(t))\,.
\end{equation*}
Arguing as in \cite[Lemma~3.1, Step~1]{FoLa2005}, we infer from \eqref{hypK} and the symmetry of $K_\delta$ that
\begin{align*}
& \frac{1}{2} \int_0^\infty \int_0^\infty \left[ x^\lambda + y^\lambda - (x+y)^\lambda \right] K(x,y) f_\delta(t,x) f_\delta(t,y)\ \mathrm{d}y \mathrm{d}x \\
& \qquad = \int_0^\infty \int_0^\infty x \left[ x^{\lambda-1} - (x+y)^{\lambda-1} \right] K(x,y) f_\delta(t,x) f_\delta(t,y)\ \mathrm{d}y \mathrm{d}x \\
& \qquad \ge k_1\int_0^\infty \int_0^\infty xy^\lambda \left[ x^{\lambda-1} - (x+y)^{\lambda-1} \right] f_\delta(t,x) f_\delta(t,y)\ \mathrm{d}y \mathrm{d}x \\
& \qquad \ge k_1 \int_0^\infty \int_x^\infty x y^\lambda \left[ x^{\lambda-1} - (2x)^{\lambda-1} \right] f_\delta(t,x) f_\delta(t,y)\ \mathrm{d}y \mathrm{d}x \\
& \qquad \ge (1-2^{\lambda-1}) k_1 \int_0^\infty \int_x^\infty x^\lambda y^\lambda f_\delta(t,x) f_\delta(t,y)\ \mathrm{d}y \mathrm{d}x \\
& \qquad \ge \frac{(1-2^{\lambda-1}) k_1}{2} M_\lambda(f_\delta(t))^2\,.
\end{align*}
Consequently, using also \eqref{n1},
\begin{equation*}
\frac{\mathrm{d}}{\mathrm{d}t} M_\lambda(f_\delta(t)) + \frac{(1-2^{\lambda-1}) k_1}{2} M_\lambda(f_\delta(t))^2 \le M_\lambda(S)\,,
\end{equation*}
from which we deduce by the comparison principle that
\begin{equation*}
M_\lambda(f_\delta(t)) \le \max\left\{ M_\lambda(f^{in}) , C_{\ref{cst1}} \right\}\,, \qquad t\ge 0\,.
\end{equation*}
Lemma~\ref{lemb1} then follows, thanks to \eqref{inv1}.
\end{proof}

The next step is the derivation of two bounds on the moment of order zero, the first one depending on $\delta\in (0,1)$ but not on $t>0$, while the second one depends mildly on $t>0$ but not on $\delta\in (0,1)$.

%%%%%%%%%%%%%%%%
\begin{lemma}\label{lemb2}	
If 
\begin{equation}
M_0(f^{in}) \le \frac{M_0(S)}{2\delta}\,, \label{inv2}
\end{equation} 
then 
\begin{equation}
M_0(f_\delta(t)) \le \frac{M_0(S)}{2\delta}\,, \qquad t\ge 0\,. \label{b5} 
\end{equation}
In addition,
\begin{equation}
\frac{k_1}{t} \int_0^t M_0(f_\delta(s)) M_\lambda(f_\delta(s))\ \mathrm{d}s  \le \frac{M_0(f^{in})}{t} + M_0(S)\,, \qquad t>0\,. \label{b6}
\end{equation} 
\end{lemma}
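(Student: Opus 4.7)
The plan is to exploit the weak formulation \eqref{wFd} with the test function $\vartheta\equiv 1$, which produces $\chi_\vartheta \equiv -1$ and therefore turns the coagulation term into a dissipative quadratic functional. More precisely, I expect to obtain the differential inequality
\begin{equation*}
\frac{\mathrm{d}}{\mathrm{d}t} M_0(f_\delta(t)) + \frac{1}{2}\int_0^\infty\int_0^\infty K(x,y) f_\delta(t,x) f_\delta(t,y)\ \mathrm{d}y\mathrm{d}x + 2\delta M_0(f_\delta(t)) = M_0(S_\delta),
\end{equation*}
and then to control the double integral from below by $k_1 M_0(f_\delta(t)) M_\lambda(f_\delta(t))$ using the lower bound in \eqref{hypK} together with the symmetry of $K$, and to use $M_0(S_\delta)\le M_0(S)$ from \eqref{n1} on the right-hand side.

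For \eqref{b5}, I would simply drop the non-negative coagulation term and compare with the scalar linear ODE $y' + 2\delta y = M_0(S)$, whose stationary value $M_0(S)/(2\delta)$ is a super-solution. The assumption \eqref{inv2} on $M_0(f^{in})$ matches this threshold, so the comparison principle gives the claimed invariance immediately.

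For \eqref{b6}, the strategy is instead to drop the non-negative $2\delta M_0(f_\delta)$ term (keeping the coagulation contribution), obtaining
\begin{equation*}
\frac{\mathrm{d}}{\mathrm{d}t} M_0(f_\delta(t)) + k_1 M_0(f_\delta(t)) M_\lambda(f_\delta(t)) \le M_0(S).
\end{equation*}
Integrating from $0$ to $t$ and using $M_0(f_\delta(t))\ge 0$ on the left yields the bound after dividing by $t$.

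I do not anticipate a real obstacle here: both inequalities are essentially immediate consequences of the weak formulation \eqref{wFd} with the constant test function, combined with the lower bound in \eqref{hypK}. The only point that requires a small justification is the validity of \eqref{wFd} with $\vartheta\equiv 1$, which is granted by Proposition~\ref{prb00}, and the fact that the relevant moments are finite for all $t\ge 0$, which follows from $f_\delta\in W^{1,\infty}((0,T),X_0)\cap L^\infty((0,T),X_{1+\lambda})$ stated therein.
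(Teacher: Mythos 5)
Your proposal is correct and follows essentially the same route as the paper: the constant test function in \eqref{wFd} gives the identity for $\frac{\mathrm{d}}{\mathrm{d}t}M_0(f_\delta)$, the lower bound in \eqref{hypK} with symmetry yields $k_1 M_0(f_\delta)M_\lambda(f_\delta)$ as a lower bound for the coagulation integral, and then \eqref{b5} follows by dropping the quadratic term and comparing with the linear ODE, while \eqref{b6} follows by integrating in time and discarding the non-negative terms. No gaps.
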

%%%%%%%%%%%%%%%%

\begin{proof}
Let $t>0$. By \eqref{wFd},
\begin{equation*}
\frac{\mathrm{d}}{\mathrm{d}t} M_0(f_\delta(t)) + \frac{1}{2} \int_0^\infty \int_0^\infty K(x,y) f_\delta(t,x) f_\delta(t,y)\ \mathrm{d}y \mathrm{d}x = M_0(S_\delta) - 2\delta M_0(f_\delta(t))\,,
\end{equation*}
which entails, together with \eqref{hypK} and \eqref{n1}, that
\begin{equation}
\frac{\mathrm{d}}{\mathrm{d}t} M_0(f_\delta(t)) + 2\delta M_0(f_\delta(t)) + k_1 M_0(f_\delta(t)) M_\lambda(f_\delta(t)) \le M_0(S)\,. \label{b7}
\end{equation}
It first follows from \eqref{b7} that 
\begin{equation*}
\frac{\mathrm{d}}{\mathrm{d}t} M_0(f_\delta(t)) + 2\delta M_0(f_\delta(t)) \le M_0(S)\,.
\end{equation*}
Hence, 
\begin{equation*}
M_0(f_\delta(t)) \le e^{-2\delta t} M_0(f^{in}) + \frac{M_0(S)}{2\delta} (1-e^{-2\delta t}) \le \max\left\{ M_0(f^{in}) , \frac{M_0(S)}{2\delta} \right\}
\end{equation*}
from which we deduce \eqref{b5} after using \eqref{inv2}. We next integrate \eqref{b7} with respect to time over $(0,t)$ and discard the first two non-negative terms in the left hand side of the resulting inequality divided by $t$ to obtain \eqref{b6}.
\end{proof}

We now derive bounds for moments of order up to $(1+\lambda)/2$. To this end, we need the following lemma.

%%%%%%%%%%%%%%%%
\begin{lemma}\label{lemb3}
Consider $\theta\in [0,1/2]$, $m\in (0,1)$, and $\sigma\in [0,(m+2\theta)/2)$. If $g\in L^1((1,\infty),x^\sigma\mathrm{d}x)$ is non-negative almost everywhere in $(1,\infty)$, then
\begin{equation*}
\left( \int_1^\infty x^\sigma g(x)\ \mathrm{d}x \right)^2 \le \frac{\kappa(\theta,m,\sigma)}{2} \int_1^\infty \int_1^\infty \left[ x^m + y^m - (x+y)^m \right] (xy)^\theta g(x) g(y)\ \mathrm{d}y \mathrm{d}x\,,
\end{equation*}
where
\begin{equation*}
\kappa(\theta,m,\sigma) := \frac{ 2^{1-m}\pi^2}{3(1-m)} 4^{(2-m)/(m+2\theta-2\sigma)}\,.
\end{equation*}
\end{lemma}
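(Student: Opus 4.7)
My plan is to reduce to the case $\theta = 0$, use the pointwise bound from \eqref{D4} to simplify the right-hand side, and then combine an integration-by-parts identity with a weighted Cauchy--Schwarz inequality.

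Set $h(x) := x^\theta g(x)$ and $\sigma' := \sigma - \theta \in [-\theta, m/2)$; this absorbs the factor $(xy)^\theta$ on the right and replaces $x^\sigma$ by $x^{\sigma'}$ on the left, reducing the statement to bounding $S := \int_1^\infty x^{\sigma'} h(x)\,\mathrm{d}x$ in terms of $D := \int_1^\infty\int_1^\infty [x^m+y^m-(x+y)^m] h(x) h(y)\,\mathrm{d}y\,\mathrm{d}x$. From \eqref{D4}, combined with $(x+y)^m \le 2^m \max(x,y)^m$ for $m \in [0,1]$, one derives the pointwise bound $x^m + y^m - (x+y)^m \ge (2-2^m)\min(x,y)^m$, so that by symmetry $D \ge 2(2-2^m) D'$, where $D' := \int\!\!\int_{1 \le y \le x} y^m h(y) h(x)\,\mathrm{d}y\,\mathrm{d}x$.

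The core step is to introduce the primitive $G(x) := \int_1^x y^m h(y)\,\mathrm{d}y$, which is non-decreasing with $G(1)=0$ and $G'(x) = x^m h(x)$ almost everywhere. Assuming $D' < \infty$ (the inequality being otherwise trivial), a first integration by parts yields $D' = \tfrac{m}{2} \int_1^\infty G(x)^2 x^{-m-1}\,\mathrm{d}x$; the boundary term at infinity vanishes because the monotonicity of $G$ gives $(1-2^{-m}) G(x)^2/(m x^m) \le \int_x^{2x} G(t)^2 t^{-m-1}\,\mathrm{d}t$, whose right-hand side is the tail of a convergent integral. A second integration by parts, writing $x^{\sigma'} h(x) = x^{\sigma'-m} G'(x)$ and using $\sigma' < m$, gives $S = (m-\sigma')\int_1^\infty x^{\sigma'-m-1} G(x)\,\mathrm{d}x$; here the boundary term at infinity vanishes because $G(x) = o(x^{m/2})$ (by the previous argument) and $\sigma' < m/2$.

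Applying Cauchy--Schwarz with weight $x^{-m-1}$, one obtains $S \le (m-\sigma') (2D'/m)^{1/2} (m-2\sigma')^{-1/2}$, the last factor being finite precisely because $\sigma' < m/2$. Squaring and combining with the lower bound on $D$ produces $S^2 \le \frac{(m-\sigma')^2}{m(m-2\sigma')(2-2^m)} D$, which in the original variables reads $S^2 \le \frac{(m+\theta-\sigma)^2}{m(m+2\theta-2\sigma)(2-2^m)} D$. Since this polynomial-type constant is dominated by $\kappa(\theta,m,\sigma)/2$ throughout the admissible range (the stated $\kappa$ containing an exponential factor $4^{(2-m)/(m+2\theta-2\sigma)}$ that greatly overshoots any rational expression in $1/(m+2\theta-2\sigma)$), the lemma follows. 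The main technical obstacle is the justification of the two integrations by parts, for which the crucial ingredient is the decay $G(x)^2/x^m \to 0$ at infinity, deduced from the finiteness of $D'$ combined with the monotonicity of $G$, without any a priori estimate on $h$.
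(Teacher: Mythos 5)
Your argument is sound in its analytic core and takes a genuinely different route from the paper's. The paper discretizes: it first writes the symmetrized integrand as $[x^{m-1}-(x+y)^{m-1}]x^{\theta+1}y^\theta$ and bounds it below by $(1-m)(x+y)^{m-2}(xy)^{\theta+1}$, then restricts to the diagonal blocks $(x_i,x_{i+1})^2$ with $x_i=i^{2/(m+2\theta-2\sigma)}$ and finishes with a discrete Cauchy--Schwarz inequality over the resulting series, whose convergence ($\sum i^{-2}=\pi^2/6$) is exactly where the condition $\sigma<(m+2\theta)/2$ enters. Your continuous, Hardy-type argument --- absorbing $(xy)^\theta$ into $h$, the pointwise bound $x^m+y^m-(x+y)^m\ge(2-2^m)\min\{x,y\}^m$, two integrations by parts on the primitive $G$, and a weighted Cauchy--Schwarz --- is correct: on $[1,R]$ the first integration by parts has non-negative boundary term, so $\tfrac{m}{2}\int_1^\infty x^{-m-1}G(x)^2\,\mathrm{d}x\le D'$ holds without circularity, the tail argument giving $G(x)=o(x^{m/2})$ is valid, and $\sigma'<m/2$ is precisely what makes $\int_1^\infty x^{2\sigma'-m-1}\,\mathrm{d}x$ converge. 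This proves the inequality with the constant $\frac{(m+\theta-\sigma)^2}{m(m+2\theta-2\sigma)(2-2^m)}$ in place of $\kappa/2$.

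There is, however, a genuine error in your final sentence: that constant is \emph{not} dominated by $\kappa(\theta,m,\sigma)/2$ on the whole admissible range. Writing $\epsilon:=m+2\theta-2\sigma$, one has $m+\theta-\sigma=(m+\epsilon)/2$, so your constant equals $\frac{(m+\epsilon)^2}{4m\epsilon(2-2^m)}$, which behaves like $\epsilon/(4m)$ as $m\to0$ with $\epsilon$ fixed; this regime is reachable whenever $\theta>0$ (take $\theta=1/2$ and $\sigma$ with $\epsilon=1$, say). In that limit $\kappa/2$ tends to the finite value $\frac{\pi^2}{3}4^{2/\epsilon}$, so for $m$ small enough your constant exceeds the stated one: the exponential factor $4^{(2-m)/\epsilon}$ dominates only as $\epsilon\to0$ and does not absorb the $1/m$ singularity of your bound. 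The slip is harmless for the paper --- Lemma~\ref{lemb4} only uses that the constant is finite and depends on $(\theta,m,\sigma)$ alone --- but as a proof of the lemma with the stated $\kappa$ it is incomplete. Either restate the conclusion with your explicit constant, or replace $\kappa$ by a quantity that genuinely majorizes both expressions; do not assert the domination without checking the $m\to0$ corner.
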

%%%%%%%%%%%%%%%%

\begin{proof}
We argue as in the proof of \cite[Lemma~8.2.14]{BLL2019} and define $\zeta := 2/(m+2\theta-2\sigma)>0$ and $x_i := i^\zeta$, $i\ge 1$. Setting
\begin{equation*}
\mathcal{I} := \frac{1}{2} \int_1^\infty \int_1^\infty \left[ x^m + y^m - (x+y)^m \right] (xy)^\theta g(x) g(y)\ \mathrm{d}y \mathrm{d}x \ge 0\,,
\end{equation*}
Lemma~\ref{lemb3} is obviously true if $\mathcal{I}=\infty$. We then assume that $\mathcal{I}<\infty$ and observe that
\begin{align}
\mathcal{I} & = \int_1^\infty \int_1^\infty \left[ x^{m-1} - (x+y)^{m-1} \right] x^{\theta+1} y^\theta g(x) g(y)\ \mathrm{d}y \mathrm{d}x \nonumber \\
& \ge (1-m) \int_1^\infty \int_1^\infty (x+y)^{m-2} (xy)^{\theta+1} g(x) g(y)\ \mathrm{d}y \mathrm{d}x \nonumber \\
& \ge (1-m) \sum_{i=1}^\infty \int_{x_i}^{x_{i+1}} \int_{x_i}^{x_{i+1}} (x+y)^{m-2} (xy)^{\theta+1} g(x) g(y)\ \mathrm{d}y \mathrm{d}x \nonumber \\
& \ge (1-m) 2^{m-2} \sum_{i=1}^\infty x_{i+1}^{m-2} \int_{x_i}^{x_{i+1}} \int_{x_i}^{x_{i+1}} (xy)^{\theta+1} g(x) g(y)\ \mathrm{d}y \mathrm{d}x \nonumber \\
& = (1-m) 2^{m-2} \sum_{i=1}^\infty x_{i+1}^{m-2} J_i^2\,, \label{b8}
\end{align}
where
\begin{equation*}
J_i := \int_{x_i}^{x_{i+1}} x^{\theta+1} g(x)\ \mathrm{d}x\,, \qquad i\ge 1\,.
\end{equation*}

Next, since $\sigma<1+\theta$, we infer from the Cauchy-Schwarz inequality that
\begin{align*}
\int_1^\infty x^\sigma g(x)\ \mathrm{d}x & = \sum_{i=1}^\infty \int_{x_i}^{x_{i+1}} x^\sigma g(x)\ \mathrm{d}x \le \sum_{i=1}^\infty x_i^{\sigma-1-\theta} J_i \\
& \le \left( \sum_{i=1}^\infty x_i^{2\sigma - 2 - 2\theta} x_{i+1}^{2-m} \right)^{1/2} \left( \sum_{i=1}^\infty x_{i+1}^{m-2} J_i^2 \right)^{1/2}\,.
\end{align*}
Hence,
\begin{equation}
\left( \int_1^\infty x^\sigma g(x)\ \mathrm{d}x \right)^2 \le \left( \sum_{i=1}^\infty x_i^{2\sigma - 2 - 2\theta} x_{i+1}^{2-m} \right) \sum_{i=1}^\infty x_{i+1}^{m-2} J_i^2\,. \label{b9}
\end{equation}
Owing to the definition of $(x_i)_{i\ge 1}$ and $\zeta$, 
\begin{align*}
\sum_{i=1}^\infty x_i^{2\sigma - 2 - 2\theta} x_{i+1}^{2-m} & \le \sum_{i=1}^\infty i^{(2\sigma - 2 - 2\theta)\zeta} (2i)^{(2-m)\zeta} = 2^{(2-m)\zeta} \frac{\pi^2}{6}\,.
\end{align*}
Combining \eqref{b8} and \eqref{b9} gives
\begin{equation*}
\mathcal{I} \ge \frac{6(1-m) 2^{m-2}}{2^{(2-m)\zeta} \pi^2} \left( \int_1^\infty x^\sigma g(x)\ \mathrm{d}x \right)^2\,,
\end{equation*}
as claimed.
\end{proof}

Thanks to Lemma~\ref{lemb3}, we are now in a position to estimate moments of order $m\in (0,1)$. As in Lemma~\ref{lemb2}, two estimates are derived, one which depends on $\delta\in (0,1)$ but not on $t>0$, the other one being independent of $\delta\in (0,1)$ with a mild dependence upon $t>0$.
\refstepcounter{NumConst}\label{cst2}

%%%%%%%%%%%%%%%%
\begin{lemma}\label{lemb4}
Let $m\in (0,1)$ and $\mu\in [0,(m+\lambda)/2)$. If 
\begin{equation}
M_m(f^{in})\le \frac{M_m(S)}{2\delta}\,, \label{inv3}
\end{equation} 
then 
\begin{equation}
M_m(f_\delta(t)) \le \frac{M_m(S)}{2\delta}\,, \qquad t\ge 0\,. \label{b10}
\end{equation}
Moreover, there is $C_{\ref{cst2}}(m,\mu)>0$ such that
\begin{equation}
\frac{1}{t} \int_0^t \left( \int_1^\infty x^\mu f_\delta(s,x)\ \mathrm{d}x \right)^2  \mathrm{d}s \le C_{\ref{cst2}}(m,\mu) \left( \frac{M_m(f^{in})}{t} + M_m(S) \right) \,. \label{b11}
\end{equation}
\end{lemma}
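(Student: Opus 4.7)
The plan mirrors the strategy of Lemma~\ref{lemb2}, now with $\vartheta = \vartheta_m$ in the weak formulation \eqref{wFd}. Since $m \in (0,1)$, the map $r \mapsto r^m$ is subadditive on $(0,\infty)$, so $\chi_m(x,y) = (x+y)^m - x^m - y^m \le 0$, and \eqref{wFd} rewrites as
\begin{equation*}
\frac{\mathrm{d}}{\mathrm{d}t} M_m(f_\delta(t)) + \frac{1}{2} \int_0^\infty \int_0^\infty [x^m + y^m - (x+y)^m] K(x,y) f_\delta(t,x) f_\delta(t,y)\ \mathrm{d}y \mathrm{d}x + 2\delta M_m(f_\delta(t)) = M_m(S_\delta),
\end{equation*}
where the coagulation integral on the left-hand side is non-negative and $M_m(S_\delta) \le M_m(S) < \infty$, as $S \in X_m$ by \eqref{b2} and \eqref{n1}.

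For the first bound \eqref{b10}, I would simply discard the non-negative coagulation term to obtain the differential inequality $\frac{\mathrm{d}}{\mathrm{d}t} M_m(f_\delta(t)) + 2\delta M_m(f_\delta(t)) \le M_m(S)$. The comparison principle then yields $M_m(f_\delta(t)) \le \max\{M_m(f^{in}), M_m(S)/(2\delta)\}$ for all $t \ge 0$, and \eqref{b10} follows from \eqref{inv3}, exactly in the spirit of the first part of Lemma~\ref{lemb2}.

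For the second bound \eqref{b11}, the key idea is to use the coagulation integral to generate the weighted moment $(\int_1^\infty x^\mu f_\delta(t,x)\ \mathrm{d}x)^2$. Combining \eqref{hypK} with the AM--GM inequality $x^\lambda + y^\lambda \ge 2(xy)^{\lambda/2}$ yields $K(x,y) \ge 2k_1 (xy)^{\lambda/2}$; using this together with non-negativity of the integrand (which in particular allows restriction of the domain of integration to $(1,\infty)^2$), the coagulation integral is bounded below by $k_1 \int_1^\infty \int_1^\infty [x^m + y^m - (x+y)^m] (xy)^{\lambda/2} f_\delta(t,x) f_\delta(t,y)\ \mathrm{d}y \mathrm{d}x$. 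Lemma~\ref{lemb3}, applied with $\theta = \lambda/2 \in [0,1/2)$ and $\sigma = \mu$ (the hypothesis $\mu < (m+\lambda)/2$ matches precisely the condition $\sigma < (m+2\theta)/2$), then delivers a lower bound of the form $(2k_1/\kappa(\lambda/2,m,\mu)) (\int_1^\infty x^\mu f_\delta(t,x)\ \mathrm{d}x)^2$. Dropping the non-negative $2\delta M_m(f_\delta(t))$ contribution, integrating over $(0,t)$, dividing by $t$, and discarding the resulting non-positive term $-M_m(f_\delta(t))/t$ gives \eqref{b11} with $C_{\ref{cst2}}(m,\mu) := \kappa(\lambda/2,m,\mu)/(2k_1)$. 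No genuine obstacle arises; the only point worth checking carefully is that the admissible range $\mu \in [0,(m+\lambda)/2)$ is exactly what Lemma~\ref{lemb3} requires for the choice $\theta = \lambda/2$, and that the AM--GM step is available because $\lambda \in [0,1)$.
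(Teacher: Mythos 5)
Your proposal is correct and follows essentially the same route as the paper: the same weak formulation with $\vartheta=\vartheta_m$, the same discard-and-compare argument for \eqref{b10}, and the same lower bound on the coagulation integral via $K(x,y)\ge 2k_1(xy)^{\lambda/2}$, restriction to $(1,\infty)^2$, and Lemma~\ref{lemb3} with $(\theta,\sigma)=(\lambda/2,\mu)$, yielding the same constant $C_{\ref{cst2}}(m,\mu)=\kappa(\lambda/2,m,\mu)/(2k_1)$. The only cosmetic remark is that the inequality $x^\lambda+y^\lambda\ge 2(xy)^{\lambda/2}$ holds for every $\lambda$ and does not require $\lambda\in[0,1)$.
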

%%%%%%%%%%%%%%%%

\begin{proof}
Let $t>0$. By \eqref{wFd}, 
\begin{equation}
\frac{\mathrm{d}}{\mathrm{d}t} M_m(f_\delta(t)) - \frac{1}{2} \int_0^\infty \int_0^\infty \chi_{m}(x,y) K(x,y) f_\delta(t,x) f_\delta(t,y)\ \mathrm{d}y \mathrm{d}x = M_m(S_\delta) - 2\delta M_m(f_\delta(t))\,. \label{b12}
\end{equation}
We infer from \eqref{hypK}, the inequality $x^\lambda+y^\lambda \ge 2 (xy)^{\lambda/2}$, $(x,y)\in (0,\infty)^2$, and Lemma~\ref{lemb3} (with $(\theta,m,\sigma)=(\lambda/2,m,\mu)$) that
\begin{align*}
& \frac{1}{2} \int_0^\infty \int_0^\infty \left[ x^m + y^m - (x+y)^m \right] K(x,y) f_\delta(t,x) f_\delta(t,y)\ \mathrm{d}y \mathrm{d}x \\ 
& \hspace{3cm} \ge k_1 \int_0^\infty \int_0^\infty \left[ x^m + y^m - (x+y)^m \right] (xy)^{\lambda/2} f_\delta(t,x) f_\delta(t,y)\ \mathrm{d}y \mathrm{d}x \\
& \hspace{3cm} \ge k_1 \int_1^\infty \int_1^\infty \left[ x^m + y^m - (x+y)^m \right] (xy)^{\lambda/2} f_\delta(t,x) f_\delta(t,y)\ \mathrm{d}y \mathrm{d}x \\
& \hspace{3cm} \ge \frac{2k_1}{\kappa(\lambda/2,m,\mu)} \left( \int_1^\infty x^\mu f_\delta(t,x)\ \mathrm{d}x \right)^2\,.
\end{align*}
Consequently, along with \eqref{n1}, we obtain
\begin{equation}
\frac{\mathrm{d}}{\mathrm{d}t} M_m(f_\delta(t)) + \frac{1}{C_{\ref{cst2}}(m,\mu)} \left( \int_1^\infty x^\mu f_\delta(t,x)\ \mathrm{d}x \right)^2 +  2\delta M_m(f_\delta(t)) \le M_m(S)\,. \label{b13}
\end{equation}
A first consequence of \eqref{b13} is that 
\begin{equation*}
\frac{\mathrm{d}}{\mathrm{d}t} M_m(f_\delta(t)) +  2\delta M_m(f_\delta(t)) \le M_m(S)\,.
\end{equation*}
After integration, we obtain
\begin{equation*}
M_m(f_\delta(t)) \le e^{-2\delta t} M_m(f^{in}) + \frac{M_m(S)}{2\delta} (1-e^{-2\delta t}) \le \max\left\{ M_m(f^{in}) , \frac{M_m(S)}{2\delta} \right\}
\end{equation*}
and use \eqref{inv3} to deduce \eqref{b10}. We next integrate \eqref{b13} with respect to time over $(0,t)$ and discard the non-negative contributions of the first and third terms in the left hand side of the resulting inequality divided by $t$ to obtain \eqref{b11}.
\end{proof}

We next derive estimates in $X_1\cap X_{1+\lambda}$ which strongly depend on $\delta$. \refstepcounter{NumConst}\label{cst3}

%%%%%%%%%%%%%%%%
\begin{lemma}\label{lemn1}
There is $C_{\ref{cst3}}>0$ such that, if $f^{in}$ satisfies \eqref{inv2} along with
\begin{equation}
M_1(f^{in}) \le \frac{M_\lambda(S)}{2\delta^{2-\lambda}} \;\;\text{ and }\;\; M_{1+\lambda}(f^{in}) \le \frac{C_{\ref{cst3}}}{\delta^{(4+\lambda-\lambda^2)/(1-\lambda)}}\,, \label{inv4}
\end{equation}
then
\begin{equation*}
M_1(f_\delta(t)) \le \frac{M_\lambda(S)}{2\delta^{2-\lambda}} \;\;\text{ and }\;\; M_{1+\lambda}(f_\delta(t)) \le \frac{C_{\ref{cst3}}}{\delta^{(4+\lambda-\lambda^2)/(1-\lambda)}}\,, \qquad t\ge 0\,.
\end{equation*}	
\end{lemma}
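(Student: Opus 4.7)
The proof splits into the two moment bounds, both derived from the weak formulation \eqref{wFd} with a suitable test function, followed by a comparison argument.

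\textbf{Bound on $M_1$.} Since $\chi_1\equiv 0$, the coagulation contribution disappears. Testing \eqref{wFd} with $\vartheta_A(x)=\min\{x,A\}$ and passing to the limit $A\to\infty$ (as in Proposition~\ref{prD2}, the passage being legitimate because $f_\delta(t)\in X_{1+\lambda}$) yields the linear ODE $\frac{d}{dt} M_1(f_\delta(t)) + 2\delta M_1(f_\delta(t)) = M_1(S_\delta)$. Using $x\le \delta^{\lambda-1} x^\lambda$ on $(0,1/\delta)$ gives $M_1(S_\delta)\le \delta^{\lambda-1}M_\lambda(S)$, and a linear comparison together with the initial bound from \eqref{inv4} yields the claimed invariance.

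\textbf{Bound on $M_{1+\lambda}$.} An analogous truncation with $\vartheta_A(x)=\min\{x^{1+\lambda},A^{1+\lambda}\}$ produces $\frac{d}{dt} M_{1+\lambda}(f_\delta) + 2\delta M_{1+\lambda}(f_\delta) = \tfrac{1}{2}\iint \chi_{1+\lambda}(x,y)K(x,y) f_\delta(x)f_\delta(y)\,dy\,dx + M_{1+\lambda}(S_\delta)$. The decisive estimate is a sharp bound on the coagulation integrand exploiting the asymmetry between $x$ and $y$: for $x\le y$, the mean value theorem gives $(x+y)^{1+\lambda}-y^{1+\lambda}\le(1+\lambda)x(x+y)^\lambda\le 2^\lambda(1+\lambda)xy^\lambda$, hence $\chi_{1+\lambda}(x,y)\le 2^\lambda(1+\lambda)xy^\lambda$, while \eqref{hypK} gives $K(x,y)\le 2k_2 y^\lambda$. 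Symmetrizing to the region $\{x\le y\}$ and factoring, this yields $\tfrac{1}{2}\iint \chi_{1+\lambda}(x,y)K(x,y)f_\delta f_\delta\,dy\,dx\le C_1 M_1(f_\delta)M_{2\lambda}(f_\delta)$ for some $C_1=C_1(\lambda,k_2)$. I then interpolate $M_{2\lambda}$ between $M_0$ and $M_{1+\lambda}$ via Hölder (valid since $0\le 2\lambda\le 1+\lambda$): $M_{2\lambda}(f_\delta)\le M_0(f_\delta)^{(1-\lambda)/(1+\lambda)}M_{1+\lambda}(f_\delta)^{2\lambda/(1+\lambda)}$. Using $M_0(f_\delta)\le M_0(S)/(2\delta)$ from Lemma~\ref{lemb2}, the $M_1$ bound just proved, and the source estimate $M_{1+\lambda}(S_\delta)\le \delta^{-1}M_\lambda(S)$, I arrive at $\frac{d}{dt} M_{1+\lambda}(f_\delta) + 2\delta M_{1+\lambda}(f_\delta) \le \frac{C_2}{\delta^{(3-\lambda^2)/(1+\lambda)}} M_{1+\lambda}(f_\delta)^{2\lambda/(1+\lambda)} + \frac{C_3}{\delta}$ with $C_2,C_3$ depending only on $\lambda,k_1,k_2,S$.

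Set $R_\delta:=C_{\ref{cst3}}/\delta^{(4+\lambda-\lambda^2)/(1-\lambda)}$. The algebraic identity $\frac{3-\lambda^2}{1+\lambda}+\frac{2\lambda}{1+\lambda}\cdot\frac{4+\lambda-\lambda^2}{1-\lambda} = \frac{4+\lambda-\lambda^2}{1-\lambda}-1$ (both sides simplify to $(3+5\lambda+\lambda^2-\lambda^3)/(1-\lambda^2)$) shows that the $\delta$-exponent of $C_2 R_\delta^{2\lambda/(1+\lambda)}/\delta^{(3-\lambda^2)/(1+\lambda)}$ exactly matches that of $2\delta R_\delta$, while $C_3/\delta$ is subdominant as $\delta\to 0$. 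Choosing $C_{\ref{cst3}}$ large enough that $C_2 C_{\ref{cst3}}^{2\lambda/(1+\lambda)}\le C_{\ref{cst3}}$ (feasible because $2\lambda/(1+\lambda)<1$ for $\lambda<1$) and $\delta_0$ small enough to absorb the $C_3/\delta$ term into the remaining margin, the right hand side of the ODE becomes non-positive whenever $M_{1+\lambda}(f_\delta)=R_\delta$. A standard continuity/contradiction argument then propagates the bound $M_{1+\lambda}(f^{in})\le R_\delta$ from \eqref{inv4} to all $t\ge 0$. The main technical subtlety is the choice of interpolation for $M_{2\lambda}$: interpolating between $M_0$ and $M_{1+\lambda}$ (rather than between $M_\lambda$ and $M_{1+\lambda}$, which would be the more natural first guess) is precisely what produces the exponent $(4+\lambda-\lambda^2)/(1-\lambda)$ via the algebraic identity above.
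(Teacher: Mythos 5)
Your argument is correct and essentially reproduces the paper's proof: the same linear equation for $M_1$ obtained from $\chi_1\equiv 0$ and $M_1(S_\delta)\le \delta^{\lambda-1}M_\lambda(S)$, the same key bound $\chi_{1+\lambda}(x,y)K(x,y)\le C\,xy^{2\lambda}$ on $\{x\le y\}$ (the paper derives the symmetric form $x^{2\lambda}y$ from \cite[Lemma~7.4.4]{BLL2019} where you use the mean value theorem), and the same interpolation of $M_{2\lambda}$ between $M_0$ and $M_{1+\lambda}$ combined with the bounds of Lemma~\ref{lemb2}, the only difference being that the paper closes with Young's inequality and explicit integration of the resulting linear differential inequality where you check directly that the level $C_{\ref{cst3}}\delta^{-(4+\lambda-\lambda^2)/(1-\lambda)}$ is an invariant threshold. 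One minor remark: the restriction to $\delta<\delta_0$ that you invoke to absorb the $C_3/\delta$ term is not needed, since $(4+\lambda-\lambda^2)/(1-\lambda)-1\ge 3$ gives $\delta^{-1}\le \delta^{1-(4+\lambda-\lambda^2)/(1-\lambda)}$ for every $\delta\in(0,1)$, so enlarging $C_{\ref{cst3}}$ suffices and the lemma holds for all $\delta\in(0,1)$ as stated.
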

%%%%%%%%%%%%%%%%

\begin{proof}
Let $t>0$. It first follows from \eqref{n1} and \eqref{wFd} that
\begin{equation*}
\frac{\mathrm{d}}{\mathrm{d}t} M_1(f_\delta(t)) + 2\delta M_1(f_\delta(t)) = M_1(S_\delta) \le \frac{M_\lambda(S)}{\delta^{1-\lambda}}\,.
\end{equation*}
Hence,
\begin{equation*}
M_1(f_\delta(t)) \le e^{-2\delta t} M_1(f^{in}) + \frac{M_\lambda(S)}{2\delta^{2-\lambda}} (1-e^{-2\delta t}) \le \max\left\{ M_1(f^{in}) , \frac{M_\lambda(S)}{2\delta^{2-\lambda}} \right\}\,,
\end{equation*}
which, together with \eqref{inv4}, readily gives the claimed estimate on $M_1(f_\delta)$. We next infer from \eqref{hypK}, \eqref{n1}, and \eqref{wFd} that
\begin{align*}
& \frac{\mathrm{d}}{\mathrm{d}t} M_{1+\lambda}(f_\delta(t)) + 2\delta M_{1+\lambda}(f_\delta(t)) \\
& \qquad = \frac{1}{2} \int_0^\infty \int_0^\infty \chi_{1+\lambda}(x,y) K(x,y) f_\delta(t,x) f_\delta(t,y)\ \mathrm{d}y \mathrm{d}x + M_{1+\lambda}(S_\delta) \\
& \qquad \le \frac{k_2}{2} \int_0^\infty \int_0^\infty \chi_{1+\lambda}(x,y) (x^\lambda + y^\lambda) f_\delta(t,x) f_\delta(t,y)\ \mathrm{d}y \mathrm{d}x + \frac{M_\lambda(S)}{\delta} \\
& \qquad = k_2 \int_0^\infty \int_0^\infty x^\lambda \chi_{1+\lambda}(x,y)  f_\delta(t,x) f_\delta(t,y)\ \mathrm{d}y \mathrm{d}x + \frac{M_\lambda(S)}{\delta}\,.
\end{align*}
For $(x,y)\in (0,\infty)^2$, it follows from \cite[Lemma~7.4.4]{BLL2019} that
\begin{equation*}
\chi_{1+\lambda}(x,y) = (x+y)^{1+\lambda} - x^{1+\lambda} - y^{1+\lambda} \le (1+\lambda) \frac{x^{1+\lambda}y + x y^{1+\lambda}}{x+y}\,, 
\end{equation*}
from which we deduce that
\begin{align*}
x^\lambda \chi_{1+\lambda}(x,y) & \le (1+\lambda) \frac{x^{1+2\lambda}y + x^{1+\lambda} y^{1+\lambda}}{x+y} \\ 
&\le (1+\lambda) \left[ \frac{x}{x+y} + \frac{x^{1-\lambda}}{(x+y)^{1-\lambda}} \frac{y^\lambda}{(x+y)^\lambda} \right] x^{2\lambda} y \\
& \le 4 x^{2\lambda}y\,.
\end{align*}
Therefore,
\begin{equation*}
\frac{\mathrm{d}}{\mathrm{d}t} M_{1+\lambda}(f_\delta(t)) + 2\delta M_{1+\lambda}(f_\delta(t)) \le 4 k_2 M_{2\lambda}(f_\delta(t)) M_1(f_\delta(t))\,,
\end{equation*}
and we use the just established bound on $M_1(f_\delta(t))$ to obtain
 \begin{equation*}
 \frac{\mathrm{d}}{\mathrm{d}t} M_{1+\lambda}(f_\delta(t)) + 2\delta M_{1+\lambda}(f_\delta(t)) \le 2 k_2 \frac{M_\lambda(S)}{\delta^{2-\lambda}} M_{2\lambda}(f_\delta(t)) \,. 
 \end{equation*}
Now, since $2\lambda\in [0,1+\lambda)$, it follows from \eqref{b5} and H\"older's inequality that
\begin{align*}
M_{2\lambda}(f_\delta(t)) & \le M_{1+\lambda}(f_\delta(t))^{2\lambda/(1+\lambda)} M_0(f_\delta(t))^{(1-\lambda)/(1+\lambda)} \\
& \le \left( \frac{M_0(S)}{2\delta} \right)^{(1-\lambda)/(1+\lambda)} M_{1+\lambda}(f_\delta(t))^{2\lambda/(1+\lambda)}\,.
\end{align*}
Combining the above two inequalities gives
\begin{equation*}
 \frac{\mathrm{d}}{\mathrm{d}t} M_{1+\lambda}(f_\delta(t)) + 2\delta M_{1+\lambda}(f_\delta(t)) \le C_{\ref{cst3}}^{(1-\lambda)/(1+\lambda)} \delta^{-(3-\lambda^2)/(1+\lambda)} M_{1+\lambda}(f_\delta(t))^{2\lambda/(1+\lambda)}\,,
\end{equation*}
with
\begin{equation*}
C_{\ref{cst3}} := (2k_2 M_\lambda(S))^{(1+\lambda)/(1-\lambda)} \frac{M_0(S)}{2}\,.
\end{equation*}
We finally use Young's inequality to derive 
\begin{equation*}
\frac{\mathrm{d}}{\mathrm{d}t} M_{1+\lambda}(f_\delta(t)) + 2\delta M_{1+\lambda}(f_\delta(t)) \le \delta M_{1+\lambda}(f_\delta(t)) + C_{\ref{cst3}} \delta^{-(2\lambda+3-\lambda^2)/(1-\lambda)}\,.
\end{equation*}
Hence,
\begin{equation*}
\frac{\mathrm{d}}{\mathrm{d}t} M_{1+\lambda}(f_\delta(t)) + \delta M_{1+\lambda}(f_\delta(t)) \le C_{\ref{cst3}} \delta^{-(2\lambda+3-\lambda^2)/(1-\lambda)}\,,
\end{equation*}
from which we deduce
\begin{align*}
M_{1+\lambda}(f_\delta(t)) & \le e^{-\delta t} M_{1+\lambda}(f^{in}) + \frac{C_{\ref{cst3}}}{\delta^{(4+\lambda-\lambda^2)/(1-\lambda)}} (1-e^{-\delta t}) \\
& \le \max\left\{ M_{1+\lambda}(f^{in}) , \frac{C_{\ref{cst3}}}{\delta^{(4+\lambda-\lambda^2)/(1-\lambda)}} \right\}\,.
\end{align*}
Combining \eqref{inv4} with the above inequality completes the proof.
\end{proof}

We end up this section with a lower bound on the moment of order $\lambda$ in the spirit of that established in Proposition~\ref{prD2} which depends, neither on $\delta\in (0,1)$, nor on $t>0$, provided the former is small enough.

%%%%%%%%%%%%%%%%
\begin{lemma}\label{lemb4b}
There are $C_{\ref{cst4}}>0$ and $\delta_0\in (0,1)$ depending only on $\lambda$, $k_1$, $k_2$, and $S$ and such that, if $\delta\in (0,\delta_0)$ and \refstepcounter{NumConst}\label{cst4}
\begin{equation}
M_\lambda(f^{in})\ge C_{\ref{cst4}} := \sqrt{\frac{M_\lambda(S)}{4^{1-\lambda} k_2}}\,, \label{inv5}
\end{equation}
then
\begin{equation*}
M_\lambda(f_\delta(t)) \ge C_{\ref{cst4}} > 0\,, \qquad t\ge 0\,.
\end{equation*}
\end{lemma}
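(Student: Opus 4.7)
The plan is to derive an autonomous differential inequality for $t\mapsto M_\lambda(f_\delta(t))$ of the form
\begin{equation*}
\frac{\mathrm{d}}{\mathrm{d}t} M_\lambda(f_\delta(t)) \ge - 2^{-\lambda} k_2 M_\lambda(f_\delta(t))^2 - 2\delta M_\lambda(f_\delta(t)) + M_\lambda(S_\delta)\,,
\end{equation*}
and then to apply a scalar comparison argument to keep $M_\lambda(f_\delta)$ above $C_{\ref{cst4}}$.

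First, since $\vartheta_\lambda(x) = x^\lambda$ is not bounded, I would insert the truncation $\vartheta_A(x) := \min\{x^\lambda,A^\lambda\}\in L^\infty(0,\infty)$ in the weak formulation \eqref{wFd} and pass to the limit $A\to\infty$. The argument is verbatim the one carried out for the four terms on the right-hand side of \eqref{D3} in the proof of Proposition~\ref{prD2}, the required integrability $f_\delta(t)\in X_\lambda$ being provided by the bound $f_\delta\in L^\infty((0,T),X_{1+\lambda})$ supplied by Proposition~\ref{prb00}. One obtains, for $t>0$,
\begin{equation*}
\frac{\mathrm{d}}{\mathrm{d}t} M_\lambda(f_\delta(t)) = -\frac{1}{2}\int_0^\infty\!\!\int_0^\infty \bigl[x^\lambda+y^\lambda-(x+y)^\lambda\bigr] K(x,y) f_\delta(t,x)f_\delta(t,y)\,\mathrm{d}y\mathrm{d}x + M_\lambda(S_\delta) - 2\delta M_\lambda(f_\delta(t))\,.
\end{equation*}

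Next, I would use exactly the upper bound already extracted in the proof of Proposition~\ref{prD2}: combining \eqref{hypK}, \eqref{D4} and \eqref{D5} yields $[x^\lambda+y^\lambda-(x+y)^\lambda] K(x,y) \le 2^{1-\lambda} k_2 (xy)^\lambda$ for all $(x,y)\in(0,\infty)^2$, so the coagulation contribution is bounded above by $2^{-\lambda} k_2 M_\lambda(f_\delta(t))^2$, giving the announced differential inequality.

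Finally, since $S\in X_\lambda$, $M_\lambda(S_\delta)\to M_\lambda(S)$ as $\delta\to 0$. The scalar ODE $\dot v = - 2^{-\lambda} k_2 v^2 - 2\delta v + M_\lambda(S_\delta)$ has a unique positive equilibrium $v_+(\delta)$, and $v_+(\delta)^2 \to 2^\lambda M_\lambda(S)/k_2$ as $\delta\to 0$; since $\lambda<1$ this limit strictly exceeds $C_{\ref{cst4}}^2 = 2^{2\lambda-2} M_\lambda(S)/k_2$, so there exists $\delta_0\in(0,1)$, depending only on $\lambda,k_1,k_2,S$, such that $v_+(\delta)\ge C_{\ref{cst4}}$ for $\delta\in(0,\delta_0)$. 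Given such a $\delta$ and $f^{in}$ satisfying \eqref{inv5}, let $v$ denote the solution of this ODE with $v(0)=M_\lambda(f^{in})\ge C_{\ref{cst4}}$; the standard comparison principle for ODEs gives $M_\lambda(f_\delta(t))\ge v(t)$ for all $t\ge 0$. A phase-line analysis on $\dot v$ (which is $>0$ for $0<v<v_+(\delta)$ and $<0$ for $v>v_+(\delta)$) then shows $v(t)\ge \min\{v(0),v_+(\delta)\}\ge C_{\ref{cst4}}$, completing the proof.

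The only non-routine step is the rigorous passage $A\to\infty$ in the first paragraph, but it reduces word-for-word to the splitting already performed in Proposition~\ref{prD2}; the rest is an elementary scalar ODE comparison together with the algebraic inequalities \eqref{D4}--\eqref{D5}.
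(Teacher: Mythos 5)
Your proposal is correct and follows essentially the same route as the paper: the same weak-formulation identity for $M_\lambda(f_\delta)$, the same pointwise bound $-\chi_\lambda(x,y)K(x,y)\le 2^{1-\lambda}k_2(xy)^\lambda$ via \eqref{hypK}, \eqref{D4}, \eqref{D5}, and the same comparison with the scalar Riccati-type ODE whose positive equilibrium exceeds $C_{\ref{cst4}}$ for small $\delta$. The only cosmetic difference is that the paper replaces $M_\lambda(S_\delta)$ by the lower bound $M_\lambda(S)/2$ for $\delta$ small before solving for the equilibrium, whereas you keep $M_\lambda(S_\delta)$ and use its convergence to $M_\lambda(S)$; both yield the required threshold $\delta_0$.
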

%%%%%%%%%%%%%%%%

\begin{proof}
Let $t>0$. Owing to \eqref{wFd},
\begin{equation*}
\frac{\mathrm{d}}{\mathrm{d}t} M_\lambda(f_\delta(t)) - \frac{1}{2} \int_0^\infty \int_0^\infty \chi_{\lambda}(x,y) K(x,y) f_\delta(t,x) f_\delta(t,y)\ \mathrm{d}y \mathrm{d}x = M_\lambda(S_\delta) - 2 \delta M_\lambda(f_\delta(t))\,.
\end{equation*}
On the one hand, by \eqref{hypK}, \eqref{D4}, and \eqref{D5},
\begin{equation*}
- \chi_{\lambda}(x,y) K(x,y) \le k_2\left[ x^\lambda + y^\lambda - (x+y)^\lambda \right] \left( x^\lambda + y^\lambda \right) \le 2^{1-\lambda} k_2 (xy)^\lambda\,, \qquad (x,y)\in (0,\infty)^2\,.
\end{equation*}
On the other hand, it follows from \eqref{b2} and \eqref{n1} that there is $\delta_1\in (0,1)$ depending only on $S$ such that
\begin{equation*}
M_\lambda(S_\delta) \ge \frac{M_\lambda(S)}{2}\,, \qquad \delta\in (0,\delta_1)\,. 
\end{equation*}
Consequently, for $\delta\in (0,\delta_1)$, 
\begin{equation}
\frac{\mathrm{d}}{\mathrm{d}t} M_\lambda(f_\delta(t)) + F_\delta\left( M_\lambda(f_\delta(t)) \right) \ge \frac{M_\lambda(S)}{2}\,, \label{b22}
\end{equation}
with
\begin{equation*}
F_\delta(z) := 2^{-\lambda} k_2 z^2 + 2 \delta z\,, \qquad z\in \mathbb{R}\,.
\end{equation*}
Since $F_\delta$ is increasing and maps $[0,\infty)$ onto $[0,\infty)$, there is a unique $z_\delta>0$ such that $F_\delta(z_\delta) = M_\lambda(S)/2$, which is here explicitly given by 
\begin{equation*}
z_\delta := \frac{\sqrt{k_2 M_\lambda(S) + 2^{1+\lambda} \delta^2} - 2^{(1+\lambda)/2} \delta}{2^{(1-\lambda)/2} k_2}\,.
\end{equation*}
We then infer from \eqref{b22} and the comparison principle that
\begin{equation}
M_\lambda(f_\delta(t)) \ge \min\{ M_\lambda(f^{in}) , z_\delta\}\,, \qquad t\ge 0\,. \label{b23}
\end{equation}
Moreover, since 
\begin{equation*}
\lim_{\delta\to 0} z_\delta = \sqrt{\frac{M_\lambda(S)}{2^{1-\lambda}k_2}} >  2^{(\lambda-1)/2} \sqrt{\frac{M_\lambda(S)}{2^{1-\lambda} k_2}} = C_{\ref{cst4}}\,,
\end{equation*}
there is $\delta_0\in (0,\delta_1)$ such that $z_\delta \ge C_{\ref{cst4}}$ for $\delta\in (0,\delta_0)$. This property, together with \eqref{inv5} and \eqref{b23} completes the proof.
\end{proof}

%%%%%%%%%%%%%%%%
%%%%%%%%%%%%%%%%
\subsection{Uniform integrability}\label{sec3.2}
%%%%%%%%%%%%%%%%
%%%%%%%%%%%%%%%%

The next step is devoted to uniform integrability estimates. 
\refstepcounter{NumConst}\label{cst5}

%%%%%%%%%%%%%%%%
\begin{lemma}\label{lemb5}
There is $C_{\ref{cst5}}>0$ such that, if  $\delta\in (0,\delta_0)$ and $f^{in}$ satisfies \eqref{inv5} as well as
\begin{equation}
\int_0^\infty \Phi(f^{in}(x))\ \mathrm{d}x \le C_{\ref{cst5}}\,, \label{inv6}
\end{equation}
the function $\Phi$ being defined in \eqref{v0}, then
\begin{equation*}
\int_0^\infty \Phi(f_\delta(t,x))\ \mathrm{d}x \le C_{\ref{cst5}}\,, \qquad t\ge 0\,.
\end{equation*}
\end{lemma}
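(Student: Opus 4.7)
The plan is to derive a time-uniform a priori bound on $L_\delta(t) := \int_0^\infty \Phi(f_\delta(t,x))\,\mathrm{d}x$ via an invariance-type argument. Specifically, we aim for a differential inequality $\mathrm{d} L_\delta/\mathrm{d}t + \alpha L_\delta \le \beta$ with constants $\alpha, \beta > 0$ depending only on $\lambda, k_1, k_2, S$, so that the comparison principle yields $L_\delta(t) \le \max\{L_\delta(0), \beta/\alpha\}$; setting $C_{\ref{cst5}} := \beta/\alpha$ (or a slightly larger value) and imposing \eqref{inv6} will then close the proof. The starting point is the formal identity
\begin{equation*}
\frac{\mathrm{d}}{\mathrm{d}t} L_\delta(t) = \int_0^\infty \Phi'(f_\delta)\,\mathcal{C} f_\delta\,\mathrm{d}x + \int_0^\infty \Phi'(f_\delta) S_\delta\,\mathrm{d}x - 2\delta\int_0^\infty \Phi'(f_\delta) f_\delta\,\mathrm{d}x\,,
\end{equation*}
which is made rigorous by testing \eqref{wFd} against $\vartheta_n := \min\{\Phi'(f_\delta), n\} \in L^\infty(0,\infty)$ and passing to the limit $n\to\infty$ using the uniform bound on $M_{1+\lambda}(f_\delta)$ from Lemma~\ref{lemn1}.

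The coagulation term is the crux of the argument. Symmetrizing the gain and restricting to $y\in (0,x/2)$, the monotonicity \eqref{monK} yields $K(y,x-y) = K(x-y,y) \le K(x,y)$ on this region. A Fenchel--Young inequality,
\begin{equation*}
\Phi'(f_\delta(x))\,f_\delta(x-y) \le \Phi(f_\delta(x-y)) + \bigl[f_\delta(x)\Phi'(f_\delta(x)) - \Phi(f_\delta(x))\bigr]\,,
\end{equation*}
(valid for any non-negative reals because $\Phi$ is convex with $\Phi(0)=0$) separates the factor $f_\delta(x-y)$ from the test $\Phi'(f_\delta(x))$. The $f_\delta(x)\Phi'(f_\delta(x))$ contribution is reabsorbed by the corresponding part of the loss integral, which itself admits the coercive lower bound $\int\!\int K(x,y) f_\delta(x)\Phi'(f_\delta(x)) f_\delta(y)\,\mathrm{d}y\mathrm{d}x \ge k_1 M_\lambda(f_\delta(t))\,L_\delta(t)$ thanks to $K \ge k_1(x^\lambda + y^\lambda)$ and the elementary inequality $r\Phi'(r) \ge \Phi(r)$ (a consequence of convexity of $\Phi$ together with $\Phi(0)=0$). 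Invoking the strict positive lower bound $M_\lambda(f_\delta(t)) \ge C_{\ref{cst4}}$ from Lemma~\ref{lemb4b} (available under \eqref{inv5} for $\delta\in (0,\delta_0)$) turns this into a genuinely absorbing contribution of order $-\alpha L_\delta(t)$; the remaining cross integral $\int\!\int_0^{x/2} K(x,y) f_\delta(y)\Phi(f_\delta(x-y))\,\mathrm{d}y\mathrm{d}x$ is estimated via \eqref{hypK}, the change of variable $z = x-y$, and Fubini, using $(y+z)^\lambda \le y^\lambda + z^\lambda$ (valid for $\lambda\in[0,1)$) to split the $K$-weight and relying on the uniform moment bounds $M_0(f_\delta), M_\lambda(f_\delta) \le C$ from Lemmata~\ref{lemb1}--\ref{lemb2}.

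The source and efflux terms are handled jointly by a second Fenchel--Young inequality, which gives $\int \Phi'(f_\delta) S_\delta\,\mathrm{d}x \le L_\Phi(S) + \int[f_\delta\Phi'(f_\delta) - \Phi(f_\delta)]\,\mathrm{d}x$. The auxiliary inequality $r\Phi'(r) \le 2\Phi(r)$, which follows from $\Phi$ convex, $\Phi'$ concave with $\Phi'(0)=0$ by observing $\Phi(r)/r \ge \Phi'(r/2) \ge \Phi'(r)/2$, controls $\int f_\delta\Phi'(f_\delta)\,\mathrm{d}x \le 2 L_\delta(t)$; this additional term is then absorbed by the coercive piece extracted from the loss, possibly after a further reduction of $\delta_0$. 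Collecting everything produces the target differential inequality, whence the lemma. The main obstacle is the coagulation step: the interplay between \eqref{monK} (needed to trade $K(y,x-y)$ for $K(x,y)$), the Fenchel--Young inequality (to isolate $\Phi(f_\delta(x-y))$ from $\Phi'(f_\delta(x))$), and the strict positive lower bound on $M_\lambda(f_\delta)$ must be arranged to produce a dissipation strong enough to dominate both the source contribution $\int f_\delta\Phi'(f_\delta)\,\mathrm{d}x$ and the residual cross-integrals on the gain side, and this is precisely what ties the proof to the new hypothesis \eqref{inv5} and the restriction $\delta\in (0,\delta_0)$.
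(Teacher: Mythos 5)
Your overall strategy (a differential inequality $\frac{\mathrm{d}}{\mathrm{d}t}\int_0^\infty\Phi(f_\delta)\,\mathrm{d}x+\alpha\int_0^\infty\Phi(f_\delta)\,\mathrm{d}x\le\beta$ with $\delta$-independent constants, the dissipation coming from the loss term via $r\Phi'(r)\ge\Phi(r)$, \eqref{hypK}, and the lower bound $M_\lambda(f_\delta)\ge C_{\ref{cst4}}$ of Lemma~\ref{lemb4b}) is the paper's. But your treatment of the coagulation term has a genuine gap. After symmetrizing the gain onto $\{y<x/2\}$ you apply the monotonicity \eqref{monK} to the \emph{whole} integrand, and are then left with the residual
\begin{equation*}
R:=\int_0^\infty\int_0^{x/2}K(x,y)f_\delta(y)\Phi(f_\delta(x-y))\,\mathrm{d}y\mathrm{d}x=\int_0^\infty\int_y^\infty K(y+z,y)f_\delta(y)\Phi(f_\delta(z))\,\mathrm{d}z\mathrm{d}y\,,
\end{equation*}
which you propose to bound by \eqref{hypK} and $(y+z)^\lambda\le y^\lambda+z^\lambda$. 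This yields $2k_2M_\lambda(f_\delta)\int_0^\infty\Phi(f_\delta)\,\mathrm{d}x+k_2M_0(f_\delta)\int_0^\infty z^\lambda\Phi(f_\delta(z))\,\mathrm{d}z$, and three things go wrong: (i) $M_0(f_\delta(t))$ is \emph{not} uniformly bounded along the flow --- Lemma~\ref{lemb2} only gives $M_0(S)/(2\delta)$, which blows up as $\delta\to0$; (ii) $\int_0^\infty z^\lambda\Phi(f_\delta(z))\,\mathrm{d}z$ is a new quantity, controlled neither by $\int_0^\infty\Phi(f_\delta)\,\mathrm{d}x$ nor by any moment you have; (iii) even the benign part carries the coefficient $2k_2M_\lambda(f_\delta)$, which cannot be absorbed by the dissipation $-k_1M_\lambda(f_\delta)\int_0^\infty\Phi(f_\delta)\,\mathrm{d}x$ extracted from the loss, since $k_2>k_1$ and the loss has already been partly spent on reabsorbing the $f_\delta\Phi'(f_\delta)$ contribution. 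The differential inequality therefore does not close.

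The fix --- which is the content of \cite[Lemma~8.2.18]{BLL2019}, invoked by the paper --- is not to estimate $R$ by moments at all: keep the original kernel $K(y,x-y)$ in the $\Phi(f_\delta(x-y))$ piece, so that the change of variables $z=x-y$ turns it into exactly $\int_0^\infty\int_y^\infty K(y,z)f_\delta(y)\Phi(f_\delta(z))\,\mathrm{d}z\mathrm{d}y$, which cancels against half of the loss term after using $f\Phi'(f)\ge\Phi(f)$. The monotonicity \eqref{monK} is needed only on the $[f_\delta\Phi'(f_\delta)-\Phi(f_\delta)](x)$ piece produced by your Fenchel--Young step, which is likewise absorbed into the loss; one then gets $J_\delta(t)\le-\tfrac12\int_0^\infty\int_0^\infty K(x,y)\Phi(f_\delta(x))f_\delta(y)\,\mathrm{d}y\mathrm{d}x\le-\tfrac{k_1}{2}M_\lambda(f_\delta)\int_0^\infty\Phi(f_\delta)\,\mathrm{d}x$ and the argument closes. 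A secondary point: your absorption of the source via $\int\Phi'(f_\delta)S_\delta\,\mathrm{d}x\le L_\Phi(S)+\int f_\delta\Phi'(f_\delta)\,\mathrm{d}x-\int\Phi(f_\delta)\,\mathrm{d}x\le L_\Phi(S)+\int\Phi(f_\delta)\,\mathrm{d}x$ requires the dissipation rate to exceed $1$, which is not guaranteed, and shrinking $\delta_0$ does not help since $C_{\ref{cst4}}$ is $\delta$-independent; the paper instead rescales, writing $S\Phi'(f_\delta)=\varepsilon(S/\varepsilon)\Phi'(f_\delta)\le\varepsilon\Phi(f_\delta)+\varepsilon\Phi(S/\varepsilon)$ with $\varepsilon$ a fraction of the dissipation rate and using $\Phi(sr)\le\max\{1,s^2\}\Phi(r)$. (Your intermediate claim $\Phi(r)/r\ge\Phi'(r/2)$ is also reversed for concave $\Phi'$, though the conclusion $r\Phi'(r)\le2\Phi(r)$ is correct via $\tfrac1r\int_0^r\Phi'\ge\tfrac12(\Phi'(0)+\Phi'(r))$.)
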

%%%%%%%%%%%%%%%%

\begin{proof}
Since $K(x-y,y) \le K(x,y)$ for $(x,y)\in (0,\infty)^2$ by \eqref{monK}, it follows from \cite[Lemma~8.2.18]{BLL2019} that
\begin{align*}
J_\delta(t) & := \int_0^\infty \Phi'(f_\delta(t,x)) \mathcal{C} f_\delta(t,x)\ \mathrm{d}x \\
& \le - \frac{1}{2} \int_0^\infty \int_x^\infty K(x,y) \left[ f_\delta \Phi'(f_\delta) - \Phi(f_\delta) \right](t,x) f_\delta(t,y)\ \mathrm{d}y \mathrm{d}x \\
& \quad - \frac{1}{2} \int_0^\infty \int_0^\infty K(x,y) f_\delta(t,x) \Phi'(f_\delta(t,x)) f_\delta(t,y)\ \mathrm{d}y \mathrm{d}x\,.
\end{align*}
By \eqref{hypK}, $K(x,y) \ge k_1 y^\lambda$, $(x,y)\in (0,\infty)^2$, and the properties of $\Phi$ guarantee that $r\Phi'(r) \ge \Phi(r)\ge 0$, $r\ge 0$, so that we further obtain
\begin{align*}
J_\delta(t) & \le - \frac{1}{2} \int_0^\infty \int_0^\infty K(x,y) \Phi(f_\delta(t,x)) f_\delta(t,y)\ \mathrm{d}y \mathrm{d}x \\
& \le - \frac{k_1}{2} \int_0^\infty \int_0^\infty y^\lambda \Phi(f_\delta(t,x)) f_\delta(t,y)\ \mathrm{d}y \mathrm{d}x \\
& = - \frac{k_1}{2} M_\lambda(f_\delta(t)) \int_0^\infty \Phi(f_\delta(t,x))\ \mathrm{d}x\,.
\end{align*}
Hence, owing to \eqref{inv1} and Lemma~\ref{lemb4b}, \refstepcounter{NumConst}\label{cst6}
\begin{equation*}
J_\delta(t) \le - 2 C_{\ref{cst6}} \int_0^\infty \Phi(f_\delta(t,x))\ \mathrm{d}x \;\;\text{ with }\;\; C_{\ref{cst6}} := \frac{\min\{2 , k_1 C_{\ref{cst4}}\}}{4} \in (0,1)\,.
\end{equation*}
We then infer from \eqref{n1}, \eqref{b4a}, and the non-negativity of $\Phi'$ that
\begin{align}
\frac{\mathrm{d}}{\mathrm{d}t}  \int_0^\infty \Phi(f_\delta(t,x))\ \mathrm{d}x & = \int_0^\infty \Phi'(f_\delta(t,x)) \partial_t f_\delta(t,x)\ \mathrm{d}x \nonumber \\
& = J_\delta(t) + \int_0^\infty \Phi'(f_\delta(t,x)) S_\delta(x)\ \mathrm{d}x \nonumber \\
& \le - 2 C_{\ref{cst6}} \int_0^\infty \Phi(f_\delta(t,x))\ \mathrm{d}x + \int_0^\infty \Phi'(f_\delta(t,x)) S(x)\ \mathrm{d}x\,. \label{b24}
\end{align}
Recalling that the properties of $\Phi$ implies that 
\begin{align*}
s \Phi'(r) & \le \Phi(r)+\Phi(s)\,, \qquad (r,s)\in [0,\infty)^2\,, \\
\Phi(sr) & \le \max\{1,s^2\} \Phi(r)\,, \qquad (r,s)\in [0,\infty)^2\,,
\end{align*}
see \cite[Proposition~7.1.9~(b) \&~(d)]{BLL2019}, we find
\begin{align*}
\int_0^\infty \Phi'(f_\delta(t,x)) S(x)\ \mathrm{d}x & = C_{\ref{cst6}} \int_0^\infty \Phi'(f_\delta(t,x)) \frac{S(x)}{C_{\ref{cst6}}}\ \mathrm{d}x \\
& \le C_{\ref{cst6}} \int_0^\infty \Phi(f_\delta(t,x))\ \mathrm{d}x + C_{\ref{cst6}} \int_0^\infty \Phi\left( \frac{S(x)}{C_{\ref{cst6}}} \right)\ \mathrm{d}x \\
& \le C_{\ref{cst6}} \int_0^\infty \Phi(f_\delta(t,x))\ \mathrm{d}x + \frac{1}{C_{\ref{cst6}}} \int_0^\infty \Phi(S(x))\ \mathrm{d}x \,.
\end{align*}
Combining the above inequality with \eqref{v2} and \eqref{b24} leads us to the differential inequality
\begin{equation*}
\frac{\mathrm{d}}{\mathrm{d}t} \int_0^\infty \Phi(f_\delta(t,x))\ \mathrm{d}x + C_{\ref{cst6}} \int_0^\infty \Phi(f_\delta(t,x))\ \mathrm{d}x \le \frac{L_\Phi(S)}{C_{\ref{cst6}}}\,,
\end{equation*}
from which we deduce that
\begin{align*}
\int_0^\infty \Phi(f_\delta(t,x))\ \mathrm{d}x & \le e^{-C_{\ref{cst6}} t} \int_0^\infty \Phi(f^{in}(x))\ \mathrm{d}x + \frac{L_\Phi(S)}{C_{\ref{cst6}}^2} \left( 1 - e^{-C_{\ref{cst6}} t} \right) \\
& \le \max\left\{ \int_0^\infty \Phi(f^{in}(x))\ \mathrm{d}x , \frac{L_\Phi(S)}{C_{\ref{cst6}}^2}\right\}\,.
\end{align*}
Lemma~\ref{lemb5} is now a straightforward consequence of \eqref{inv6} and the above inequality with $C_{\ref{cst5}}=L_\Phi(S)/C_{\ref{cst6}}^2$.
\end{proof}

%%%%%%%%%%%%%%%%
%%%%%%%%%%%%%%%%
\subsection{Proof of Proposition~\ref{prb0}}\label{sec3.3}
%%%%%%%%%%%%%%%%
%%%%%%%%%%%%%%%%

We fix $\delta\in (0,\delta_0)$ and consider the subset $\mathcal{Z}_\delta$ of $X_0 = L^1(0,\infty)$ defined by: $h\in \mathcal{Z}_\delta$ if and only if $h\in X_0^+$ satisfies
\begin{subequations}\label{c1}
\begin{align}
& C_{\ref{cst4}} \le M_\lambda(h) \le C_{\ref{cst1}} \,, \qquad \int_0^\infty \Phi(h(x))\ \mathrm{d}x \le C_{\ref{cst5}}\,, \label{c1a}\\
& M_m(h) \le \frac{M_m(S)}{2\delta}\,, \qquad m\in [0,1)\,, \label{c1b}\\
& M_1(h) \le \frac{M_\lambda(S)}{2\delta^{2-\lambda}}\,, \qquad M_{1+\lambda}(h) \le \frac{C_{\ref{cst3}}}{\delta^{(4+\lambda-\lambda^2)/(1-\lambda)}} \,. \label{c1c}
\end{align} 
\end{subequations} 
On the one hand, given $f^{in}\in \mathcal{Z}_\delta$ and $t\ge 0$, it follows from Lemma~\ref{lemb1}, Lemma~\ref{lemb4b}, and Lemma~\ref{lemb5} that $f_\delta(t)=\Psi_\delta(t,f^{in})$ satisfies \eqref{c1a} and from Lemma~\ref{lemb2} and  Lemma~\ref{lemb4} that it satisfies \eqref{c1b}. Furthermore, $\Psi_\delta(t,f^{in})$ satisfies \eqref{c1c} due to Lemma~\ref{lemn1}. Consequently, $\Psi_\delta(t,f^{in})\in \mathcal{Z}_\delta$ for all $t\ge 0$, so that $\mathcal{Z}_\delta$ is a positive invariant set for the semi-flow $\Psi_\delta$. On the other hand, $\mathcal{Z}_\delta$ is non-empty since $C_{\ref{cst4}} < C_{\ref{cst1}}$ by \eqref{inv1} and \eqref{inv5}. Moreover, owing to the superlinearity \eqref{v1} of $\Phi$, the Dunford-Pettis theorem ensures that $\mathcal{Z}_\delta$ is a closed convex and sequentially weakly compact subset of $L^1(0,\infty)$. Since $\Psi_\delta$ is a semi-flow on $\mathcal{Z}_\delta$ endowed with its weak topology by Proposition~\ref{prb00}, it follows from \cite[Theorem~1.2]{EMRR2005} that there is $\varphi_\delta\in \mathcal{Z}_\delta$ such that $\Psi_\delta(t,\varphi_\delta) = \varphi_\delta$ for all $t\ge 0$; that is, $\varphi_\delta\in \mathcal{Z}_\delta$ is a stationary solution to \eqref{b4a}. Since $\Psi_\delta(t,\varphi_\delta) = \varphi_\delta$ for all $t\ge 0$, we infer from \eqref{b6} that
\begin{equation*}
k_1 M_0(\varphi_\delta) M_\lambda(\varphi_\delta) \le \frac{M_0(\varphi_\delta)}{t} + M_0(S)
\end{equation*}
for all $t>0$. Hence, taking the limit $t\to\infty$,
\begin{equation*}
k_1 M_0(\varphi_\delta) M_\lambda(\varphi_\delta) \le M_0(S)\,,
\end{equation*}
from which we deduce, thanks to the lower bound for $M_\lambda(\varphi_\delta)$ in \eqref{c1},
\begin{equation}
k_1  C_{\ref{cst4}} M_0(\varphi_\delta) \le M_0(S)\,. \label{c2}
\end{equation}
Similarly, for $\mu\in (0,(1+\lambda)/2)$, it follows from \eqref{b11} with $m=(2\mu+1-\lambda)/2\in (0,1)$ that
\begin{equation*}
\left( \int_1^\infty x^\mu \varphi_\delta(x)\ \mathrm{d}x \right)^2 \le C_{\ref{cst2}}((2\mu+1-\lambda)/2,\mu) \left( \frac{M_{(2\mu+1-\lambda)/2}(\varphi_\delta)}{t} + M_{(2\mu+1-\lambda)/2}(S) \right)
\end{equation*}
for all $t>0$. Letting $t\to\infty$ gives
\begin{equation*}
\left( \int_1^\infty x^\mu \varphi_\delta(x)\ \mathrm{d}x  \right)^2 \le C_{\ref{cst2}}((2\mu+1-\lambda)/2,\mu)  M_{(2\mu+1-\lambda)/2}(S)\,. 
\end{equation*}
Together with \eqref{c2}, the above estimate entails that
 \refstepcounter{NumConst}\label{cst7}
\begin{align}
M_\mu(\varphi_\delta) & = \int_0^1 x^\mu \varphi_\delta(x)\ \mathrm{d}x + \int_1^\infty x^\mu \varphi_\delta(x)\ \mathrm{d}x \nonumber \\ 
& \le M_0(\varphi_\delta) + \sqrt{C_{\ref{cst2}}((2\mu+1-\lambda)/2,\mu)  M_{(2\mu+1-\lambda)/2}(S)} \nonumber \\
& \le C_{\ref{cst7}}(\mu) := \frac{M_0(S)}{k_1 C_{\ref{cst4}}} + \sqrt{C_{\ref{cst2}}((2\mu+1-\lambda)/2,\mu)  M_{(2\mu+1-\lambda)/2}(S)}\,. \label{c3}
\end{align}
Collecting the estimates \eqref{c1a}, \eqref{c2}, and \eqref{c3} gives \eqref{c4} and \eqref{c5} and completes the proof of Proposition~\ref{prb0}.

%%%%%%%%%%%%%%%%
%%%%%%%%%%%%%%%%
\section{Existence}\label{sec4}
%%%%%%%%%%%%%%%%
%%%%%%%%%%%%%%%%

\begin{proof}[Proof of Theorem~\ref{thmD0}~(a)]
Since $\Phi$ is superlinear at infinity by \eqref{v1}, it follows from \eqref{c4}, \eqref{c5}, and the Dunford-Pettis theorem that $(\varphi_\delta)_{\delta\in (0,\delta_0)}$ is relatively sequentially weakly compact in $X_0$. In turn, this compactness property and \eqref{c5} imply that $(\varphi_\delta)_{\delta\in (0,\delta_0)}$ is actually relatively sequentially weakly compact in $X_\mu$ for any $\mu\in [0,(1+\lambda)/2)$. Consequently, using a diagonal process, there are a subsequence $(\varphi_{\delta_j})_{j\ge 2}$ of $(\varphi_\delta)_{\delta\in (0,\delta_0)}$ and 
\begin{equation}
\varphi\in \bigcap_{\mu\in [0,(1+\lambda)/2)} X_{\mu}^+ \label{e1}
\end{equation}
such that, as $j\to\infty$,
\begin{equation}
\varphi_{\delta_j} \rightharpoonup \varphi \;\;\text{ in }\;\; X_\mu\,, \qquad \mu\in \left[ 0 , \frac{1+\lambda}{2} \right)\,. \label{e2}
\end{equation}
Since $\lambda\in [0,(1+\lambda)/2)$, it readily follows from \eqref{e2} that $\left( [(x,y)\mapsto \varphi_{\delta_j}(x)\varphi_{\delta_j}(y)] \right)_{j\ge 2}$ converges weakly to $[(x,y)\mapsto \varphi(x)\varphi(y)]$ in $X_{0,\lambda}\times X_{0,\lambda}$ as $j\to\infty$. It is then straightforward to pass to the limit $j\to\infty$ in the identity \eqref{n3} satisfied by $\varphi_{\delta_j}$ and deduce that $\varphi$ satisfies \eqref{D1}, thereby completing the proof of Theorem~\ref{thmD0}~(a), recalling that the other integrability properties of $\varphi$ listed there follow from Propositions~\ref{prc1} and~\ref{prc2}.
\end{proof}

%%%%%%%%%%%%%%%%
%%%%%%%%%%%%%%%%
\section{Non-existence}\label{sec5}
%%%%%%%%%%%%%%%%
%%%%%%%%%%%%%%%%

\begin{proof}[Proof of Theorem~\ref{thmD0}~(b)]
The proof relies on the same device as those of Propositions~\ref{prc1} and~\ref{prD2}. For $A>0$ and $x>0$, we set $\vartheta_A(x) = \min\{x,A\}$. We infer from \eqref{D1} and the symmetry of $K$ that
\begin{equation}
\begin{split}
\int_0^\infty \vartheta_A(x) S(x)\ \mathrm{d}x & = \frac{1}{2} \int_0^A \int_{A-x}^A (x + y - A) K(x,y) \varphi(x) \varphi(y)\ \mathrm{d}y\mathrm{d}x \\ 
& \quad + \int_0^A \int_A^\infty x K(x,y) \varphi(x) \varphi(y)\ \mathrm{d}y\mathrm{d}x \\
& \quad + \frac{A}{2} \int_A^\infty \int_A^\infty  K(x,y) \varphi(x) \varphi(y)\ \mathrm{d}y\mathrm{d}x \,.
\end{split} \label{D7}
\end{equation} 
	
We are left with identifying the limit as $A\to\infty$ of each term on the right hand side of \eqref{D7}. We first infer from \eqref{hypK} that
\begin{align*}
0 & \le \mathbf{1}_{(0,A)}(x) \mathbf{1}_{(A-x,A)}(y) (x+y-A) K(x,y) \varphi(x) \varphi(y) \\
& \le k_2  (x+y-A) \left( x^\lambda+y^\lambda \right) \varphi(x) \varphi(y) \\
& \le k_2 \left( x^\lambda y + y^\lambda x \right) \varphi(x)\varphi(y)\,.
\end{align*}
Since $\varphi\in X_1\cap X_\lambda\subset X_0\cap X_\lambda$ due to $\lambda\ge 1$ and 
\begin{equation*}
\lim_{A\to\infty} \mathbf{1}_{(0,A)}(x) \mathbf{1}_{(A-x,A)}(y) = 0\,, \qquad (x,y)\in (0,\infty)^2\,,
\end{equation*}
it follows from Lebesgue's dominated convergence theorem that
\begin{equation*}
\lim_{A\to\infty} \frac{1}{2} \int_0^A \int_{A-x}^A (x+y-A) K(x,y) \varphi(x) \varphi(y)\ \mathrm{d}y\mathrm{d}x = 0\,.
\end{equation*}
Next, using once more \eqref{hypK},
\begin{align*}
0 & \le \int_0^A \int_A^\infty x K(x,y) \varphi(x) \varphi(y)\ \mathrm{d}y\mathrm{d}x \le k_2 \int_0^A \int_A^\infty x \left( x^\lambda + y^\lambda \right) \varphi(x) \varphi(y)\ \mathrm{d}y\mathrm{d}x \\
& \le 2 k_2 \int_0^A \int_A^\infty xy^\lambda \varphi(x) \varphi(y)\ \mathrm{d}y\mathrm{d}x \le 2 k_2 M_1(\varphi) \int_A^\infty y^\lambda \varphi(y)\ \mathrm{d}y
\end{align*}
and
\begin{align*}
0 & \le A \int_A^\infty \int_A^\infty K(x,y) \varphi(x) \varphi(y)\ \mathrm{d}y\mathrm{d}x \le A k_2 \int_A^\infty \int_A^\infty \left( x^\lambda + y^\lambda \right) \varphi(x) \varphi(y)\ \mathrm{d}y\mathrm{d}x \\
& \le k_2 \int_A^\infty \int_A^\infty \left( x^\lambda y + x y^\lambda \right) \varphi(x) \varphi(y)\ \mathrm{d}y\mathrm{d}x \le 2 k_2 M_1(\varphi) \int_A^\infty y^\lambda \varphi(y)\ \mathrm{d}y\,,
\end{align*}
from which we deduce that
\begin{align*}
& \lim_{A\to\infty} \int_0^A \int_A^\infty x K(x,y) \varphi(x) \varphi(y)\ \mathrm{d}y\mathrm{d}x = 0\,, \\
& \lim_{A\to\infty} \frac{A}{2} \int_A^\infty \int_A^\infty  K(x,y) \varphi(x) \varphi(y)\ \mathrm{d}y\mathrm{d}x = 0\,,
\end{align*}
recalling that $\varphi\in X_\lambda$. Collecting the above information, we may take the limit $A\to\infty$ in \eqref{D7} and conclude that
\begin{equation*}
\lim_{A\to \infty} \int_0^\infty \min\{x,A\} S(x)\ \mathrm{d}x = 0\,.
\end{equation*}
Hence, $S\equiv 0$ which, together with \eqref{D2a}, implies that $\varphi\equiv 0$ as well.
\end{proof}

%%%%%%%%%%%%%%%%
%%%%%%%%%%%%%%%%
\section*{Acknowledgments}
%%%%%%%%%%%%%%%%
%%%%%%%%%%%%%%%%

Part of this work was done while enjoying the support and hospitality of the Hausdorff Research Institute for Mathematics within the Junior Trimester Program \textsl{Kinetic Theory}. I also thank Marina~A. Ferreira and Juan~J.L. Vel\'azquez for motivating discussions on the topic studied in this paper.

%%%%%%%%%%%%%%%%
%%%%%%%%%%%%%%%%
\bibliographystyle{siam}
\bibliography{StatSolCoagSource}
%%%%%%%%%%%%%%%%
%%%%%%%%%%%%%%%%

\end{document}